\newtheorem{theorem}{Theorem}[section]
\newtheorem{lemma}[theorem]{Lemma}
\newtheorem{corollary}[theorem]{Corollary}
\theoremstyle{definition}
\newtheorem{example}[theorem]{Example}
\theoremstyle{remark}
\numberwithin{equation}{section}
\begin{document}
\title[Kernels of  linear operators]{The kernels of powers of linear operator via Weyr characteristic}

\author{Jie Jian}
\address{School of Mathematics and Statistics,
Hubei University,
Wuhan, $430062$, P. R. China}
\curraddr{}
\email{jjian22@163.com}
\thanks{}

\author{Jun Liao}
\address{School of Mathematics and Statistics,
Hubei University,
Wuhan, $430062$, P. R. China}
\curraddr{}
\email{jliao@hubu.edu.cn}
\thanks{}

\author{Heguo Liu}
\address{Department of Mathematics, Hainan University, Haikou, $570228$, China
}
\curraddr{}
\email{ghliu@hainanu.edu.cn}
\thanks{}

\subjclass[2010]{15A24, 15A27}
\keywords{kernel, operator, Weyr canonical form, formula for dimension, similarity invariants}
\date{}

\dedicatory{}

\begin{abstract}
The adjoint of a matrix in the Lie algebra associated with a matrix algebra is a fundamental operator, which can be generalized to a more general operator $\varphi_{AB}: X\rightarrow AX-XB$ by two matrices $A$ and $B$. The kernel of the operator is very well-known and it can be found in Gantmacher's book. The formulas for the dimensions of the kernels of arbitrary powers of the operator $\varphi_{AB}$ were given in terms of the Segre characteristics of these two matrices by the second and third authors in this paper and their collaborators. This paper provides an alternative approach to this problem via the Weyr characteristic in a more essential method. We obtain formulas for the dimensions of the kernels of arbitrary powers of the operator in terms of the Weyr characteristics. Furthermore, the basis for kernel of each power of the operator is described explicitly. As a consequence, for arbitrary square matrices $A$ and $B$ over an algebraically closed field, the dimension of the kernel of each power of the operator $\varphi_{A-\lambda I,B}$ for eigenvalues $\lambda$ of $\varphi_{AB}$ can be viewed as a similarity invariant of the operator $\varphi_{AB}$, so we characterise the operator within similarity, which should be of interest to a number of people (including physicists).
\end{abstract}

\maketitle

\section{Introduction}\label{section1}

Let $M_{n\times n}(\mathbb{C})$ be the algebra of $n\times n$ matrices over the complex field $\mathbb{C}$. Then $M_{n\times n}(\mathbb{C})$ becomes a Lie algebra with the Lie product $[A, B]=AB-BA$. The adjoint $\operatorname{ad} A(X)=[A, X]$ is an operator of $M_{n\times n}(\mathbb{C})$. The formula for the dimension of the kernel space of $\operatorname{ad} A$ is due to Frobenius, see \cite[Theorem VII.1]{Wedd} or  \cite[Chapter VIII, Theorem 2]{Gantmacher}. 
Subsequently, Gracia obtained the formulas for the dimensions of $\operatorname{ker}(\operatorname{ad}^{2}A)$ and $\operatorname{ker}(\operatorname{ad}^{3}A)$ in \cite{Gra}. Note that the kernels of $\operatorname{ad}^{2}A$ and $\operatorname{ad}^{3}A$ are the solutions of the matrix equations $[A,[A,X]]=0$ and $[A,[A,[A,X]]]=0$ of the second and third fold Lie product, respectively.

Suppose $A\in M_{m\times m}(\mathbb{C})$ and $B\in M_{n\times n}(\mathbb{C})$. Let $\varphi_{AB}$ be the operator of $M_{m\times n}(\mathbb{C})$ defined by $\varphi_{AB}(X)=AX-XB$ for $X\in M_{m\times n}(\mathbb{C})$.
The kernel $\operatorname{ker} (\varphi_{AB}^{})$ of the operator $\varphi_{AB}$ is very well-known and it can be found  in Gantmacher's book \cite[Chapter VIII, Theorem 1]{Gantmacher}.
The formulas for the dimensions of $\operatorname{ker} (\varphi_{AB}^{2})$ and $\operatorname{ker}(\varphi_{AB}^{3})$ were determined  in \cite{LLW}. The formulas for the dimensions of the kernels of arbitrary powers of the operator were given in terms of the Segre characteristics of $A$ and $B$ by the second and third authors in this paper and their collaborators in \cite{LLWX}. In particular, the general solution of $[A,...,[A,X]]=0$ of the $k$-fold Lie product with $k > 1$ was determined. 

It is well known that the Jordan and Weyr canonical forms are different canonical forms for similarity. One of the differences between them is that the Jordan canonical form works well for problems involving powers of matrices, while the Weyr canonical form works well for problems involving commutativity and interactions involving more than one matrix. The Weyr form did not become popular among
mathematicians and it was overshadowed by the closely related, but
distinct, canonical form known by the name Jordan canonical form.
Recently several applications have been found for the Weyr matrix, 
of particular interest is an application of the Weyr matrix in the study of
phylogenetic invariants in biomathematics, for more details of the Weyr form we refer to \cite{MCV} and  \cite[Section 3.4, Notes and Further Readings]{HJ}. Please refer to \cite[Section 2.4, Historical Remarks]{MCV} for historical remarks. There have been several relatively recent papers involving the Weyr form, for example, 
Bondarenko, Futorny, Petravchuk and Sergeichuk gave a simple normal form under similarity of a pair of commuting nilpotent matrices using the Weyr form in \cite[Theorem 2]{BFPS}.
Moreover, they gave a method for constructing all matrices commuting with a given Weyr matrix in \cite[Theorem 3]{BFPS}.
In the study of commutative finite-dimensional algebras in \cite{MW}, O’Meara and Watanabe made use of the Weyr structures for block matrices in order to apply to the monomial complete intersection ring and the Hilbert function of an artinian algebra. 
Cruz and Misa in \cite{CM} and Cruz and Tanedo in \cite{CT} also took advantage of the Weyr form in dealing with problems concerning matrix centralizers.
Holbrook and O’Meara in \cite{HO} developed a computing strategy to resolve the Gerstenhaber problem by constructing counter-example, the key tool is again the Weyr form.
The Weyr form has been shown to be a better tool than its Jordan cousin in a number of situations, all the instances above show that the Weyr form seems better suited than its Jordan counterpart.

Here we collect some related comments about the centralizers of Weyr blocks which are taken from \cite{MCV, HJ}.
The Weyr form (in its standard partition) was rediscovered by Belitskii, whose motivation was to find a canonical form for similarity with the property that every matrix commutes with it is block upper triangular. 
Belitskii also described identities among the blocks of the standard partition of a matrix that commutes with a Weyr block. Belitskii was probably the first to observe the nice block upper triangular form of the  matrices that centralize a given nilpotent matrix in Weyr form, see \cite[Proposition 2.3.3, Proposition 3.2.2]{MCV} or \cite[Theorem 3.4.2.10a]{HJ}. 
The remarkable theorem \cite[Theorem 3.4.2.10b]{HJ} of O’Meara-Vinsonhaler about commuting families contains yet another rediscovery of the Weyr canonical form as well as the efficient formulation \cite[Equation 3.4.2.12]{HJ} of the identities among the blocks of matrices that commute with a Weyr block, see also Lemma \ref{kernal-image} below for intertwining relation of two Weyr blocks.

Essentially, $\operatorname{ker} \varphi_{AB}^{\ell}$ is mainly about commutativity in some sense, so it is better to use the Weyr canonical form to calculate $\operatorname{ker} \varphi_{AB}^{\ell}$. It is the proof of the main theorem that requires moving to the associated Weyr forms.
At the same time, using the Weyr canonical form has its own particular interest for matrix computations and potential applications.
In fact, the referee of \cite{LLWX} encouraged the authors to try to express the dimension in terms of the Weyr characteristics, since $\operatorname{dim}\operatorname{ker} \varphi_{AB}^{\ell}$ was given in terms of the Segre characteristics in \cite{LLWX}, while the Weyr characteristic was desirable in this situation.
In this paper, we obtain the formula for the dimension of $\operatorname{ker} \varphi_{AB}^{\ell}$ for every $\ell$ in terms of the characteristics of Weyr.
Furthermore, the basis of $\operatorname{ker} \varphi_{AB}^{\ell}$ is described explicitly. 
As a consequence, 
the dimensions of the kernels $\operatorname{ker} \varphi_{A-\lambda I,B}^{\ell}$ of powers of the operator $\varphi_{A-\lambda I,B}$ for eigenvalues $\lambda$ of $\varphi_{AB}$ can be viewed as a similarity invariant of the operator $\varphi_{AB}$. Therefore, we characterise the operator $\varphi_{AB}$ within similarity.

For convenience, we collect some definitions and  terminologies from \cite{HJ}.
Let $A\in M_{m\times m}(\mathbb{C})$, let $\lambda\in \mathbb{C}$, let $k$ be a positive integer, let
\[\operatorname{nullity}(A-\lambda I)^{k}=\operatorname{dim}\operatorname{ker}(A-\lambda I)^{k}, \; \operatorname{nullity}(A-\lambda I)^{0}=0\]
and define 
\[\omega_{k}(A,\lambda)=\operatorname{nullity}(A-\lambda I)^{k}-\operatorname{nullity}(A-\lambda I)^{k-1}\]
The index of an eigenvalue $\lambda$ of $A$ is the smallest positive integer $q$ such that $\operatorname{nullity}(A-\lambda I)^{q}=\operatorname{nullity}(A-\lambda I)^{q+1}$. 
The Weyr characteristic (or Weyr structure ) of $A\in M_{m\times m}(\mathbb{C})$ associated with $\lambda\in \mathbb{C}$ is
\[\omega(A,\lambda)=(\omega_{1}(A,\lambda),\omega_{2}(A,\lambda),\dots,\omega_{q}(A,\lambda))\]
in which $q$ is the index of the eigenvalue $\lambda$ of $A$. In particular, 
\[\omega_{1}(A,\lambda)\geq \omega_{2}(A,\lambda)\geq \dots \geq \omega_{q}(A,\lambda)\geq 1\] 
is a nonincreasing sequence of positive integers. Without causing confusion, we simply write $\omega_{i}=\omega_{i}(A,\lambda)$ and $\omega=(\omega_{1},\omega_{2},\cdots,\omega_{q})$. 
The Weyr block of $A$ associated with the eigenvalue $\lambda$ is the upper triangular $q\times q$ block bidiagonal matrix
$$W_{A}(\lambda)=\begin{pmatrix} 
	\lambda I_{\omega_{1}} & N_{\omega_{1},\omega_{2}} &   &  & \\ 
	& \lambda I_{\omega_{2}} &  N_{\omega_{2},\omega_{3}}  &  &\\    
	&  & \ddots  &\ddots   &\\  
	&   &   &\ddots &N_{\omega_{q-1},\omega_{q}} \\  
	&   &    & & \lambda I_{\omega_{q}} \\  
\end{pmatrix}=\lambda I_{\omega}+N_{\omega}   	$$
in which
$N_{\omega_{i},\omega_{j}}= \begin{pmatrix}
	I_{\omega_{j}}   \\  
	0\\   
\end{pmatrix}\in M_{\omega_{i}\times \omega_{j}}(\mathbb{C}),  i<j$,  and  $I_{\omega_{j}}$ is the identity $\omega_{j}\times \omega_{j}$ matrix.
Suppose that  $\lambda_{1},\lambda_{2},\dots,\lambda_{a}$ are distinct eigenvalues of $A\in M_{m\times m}(\mathbb{C})$ in any given order. The Weyr matrix $W_{A}=W_{A}(\lambda_{1})\oplus W_{A}(\lambda_{2})\oplus \dots\oplus W_{A}(\lambda_{a})$  is a Weyr canonical form of $A$, which is unique up to permutations of its direct summands.
Please note that the calculation of the Segre characteristic is not nearly as straightforward in terms of nullities of powers as for the Weyr characteristic, see \cite[Proposition 2.2.3 and Corollary 2.4.6]{MCV}, although the Jordan structure can be deduced as the dual partition of the Weyr structure,  see \cite[Theorem 2.4.1 and
Corollary 2.4.6]{MCV}.

By the Weyr canonical form theorem (also called the Weyrisimilitude theorem) \cite[Theorem 2.2.2]{MCV} or \cite[Theorem 3.4.2.3]{HJ}, 
every square matrix over an algebraically closed field is similar to a matrix in Weyr canonical form.
Suppose $A\in M_{m\times m}(\mathbb{C})$ and $B\in M_{n\times n}(\mathbb{C})$. Then there exist invertible matrices $P\in \operatorname{GL}_{m}(\mathbb{C})$ and $Q\in \operatorname{GL}_{n}(\mathbb{C})$
such that $P^{-1}AP=W_{A}$ and $Q^{-1}BQ=W_{B}$ are Weyr matrices, respectively.
Then for each $\ell$, we have $$P^{-1}\varphi_{AB}^{\ell}(X)Q=\varphi_{W_{A}W_{B}}^{\ell}(P^{-1}XQ)$$
Suppose that $\alpha_{i}=(\alpha_{i1},\alpha_{i2},\dots ,\alpha_{ip_i})=\omega(A,\lambda_{i})$ is the Weyr characteristic of $A$ associated with eigenvalue $\lambda_{i}$, for $i=1,2,\dots, a$,  and $\beta_{j}=(\beta_{j1},\beta_{j2},\dots ,\beta_{jq_j})=\omega(B,\mu_{j})$ is the Weyr characteristic of $B$ associated with eigenvalue $\mu_{j}$, for $ j=1,2,\dots,b$.
Then the Weyr canonical forms of $A$ and $B$ are respectively
$$W_{A}= \begin{pmatrix}
	\lambda_{1}I_{\alpha_{1}}+N_{\alpha_{1}} &   &   &   \\   
	&	\lambda_{2}I_{\alpha_{2}}+N_{\alpha_{2}}  &    &  \\  
	&   & \ddots  &  \\  
	&   &    &  \lambda_{a}I_{\alpha_{a}}+N_{\alpha_{a}} \\   
\end{pmatrix}   	$$
and
$$W_{B}= \begin{pmatrix} 
	\mu_{1}I_{\beta_{1}}+N_{\beta_{1}} &   &   &   \\  
	&	\mu_{2}I_{\beta_{2}}+N_{\beta_{2}}  &    &  \\       
	&   & \ddots  &  \\   
	&   &    &  \mu_{b}I_{\beta_{b}}+N_{\beta_{b}} \\   
\end{pmatrix}   	$$
Define
\[d_{ijk}=\sum_{r=2}^{k}\sum_{l=0}^{\min\{p_{i},q_{j}\}}\alpha_{i,r+l}(\beta_{j,1+l}-\beta_{j,2+k-r+l})+\sum_{l=0}^{\min\{p_{i},q_{j}\}}\alpha_{i,1+l}\sum_{s=1}^{k}\beta_{j,s+l}\]
where $\alpha_{i,l}=0$ if $l>p_{i}$ and $\beta_{j,l}=0$ if $l>q_{j}$.

Let $e_{ij}\in M_{m\times n}(\mathbb{C})$ be the matrix all of whose entries are zero except $(i, j)$ entry is 1.
Recall that $\gamma=(\gamma_{1}, \gamma_{2}, \dots , \gamma_{p})$ is said to be a partition of a positive integer $m$ with $p$ parts if $\gamma_{1}\geq \gamma_{2}\geq \dots\geq \gamma_{p}\geq 1$ are non-increasing sequences of positive integers whose sum $|\gamma|$ is $m$, write $\gamma\in P(m,p)$.
Let $\gamma\in P(m,p)$, $\delta\in P(n,q)$  and $\epsilon\in P(l,r)$.
Let $X$ be a $p\times q$ block matrix $(X_{ij})_{p\times q}$ of type $(\gamma,\delta)$, whose $(i,j)$ block is $\gamma_{i}\times \delta_{j}$ matrix $X_{ij}$.
That is to say, $\gamma$ and $\delta$ are respectively the partitions of the rows and columns of the matrix $X$.
Let $E_{i}(\gamma)$ be the $p\times p$ block diagonal matrix of type $\gamma$ all of whose blocks are zero except the $i$-th block is the identity matrix $I_{\gamma_{i}}$.
Then $E_{j}(\gamma)E_{k}(\gamma)= \delta_{jk}E_{j}(\gamma)$.
Assume $Z$ is a $k\times l$ matrix.
Let $E_{ij}(\gamma,\delta)\boxtimes Z$ be the $p\times q$ block matrix of type $(\gamma,\delta)$ all of whose blocks are zero except $(i,j)$ block is $\gamma_{i}\times \delta_{j}$ matrix whose upper left  block is  the upper left  block of $Z$ of $\min\{\gamma_{i},k\}\times \min\{\delta_{j},l\}$ and any other block is zero if exists.
Write $E_{ij}(\gamma)\boxtimes-$ for $E_{ij}(\gamma,\gamma)\boxtimes-$. 
We say $E_{ij}\boxtimes-$ is of type $(\gamma,\delta)$ if $E_{ij}\boxtimes-$ is $E_{ij}(\gamma,\delta)\boxtimes-$.
Then we have
$E_{ij}(\gamma,\delta)\boxtimes X_{ij}=E_{i}(\gamma)XE_{j}(\delta)$, and $X=\sum_{i,j} E_{ij}(\gamma,\delta)\boxtimes X_{ij}$.
If $Y$ is a $q\times r$ block matrix $(Y_{kl})_{q\times r}$ of type $(\delta, \epsilon)$, whose $(k,l)$ block $Y_{kl}$ is $ \delta_{k}\times \epsilon_{l}$ matrix, 
then 
$Y=\sum_{k,l} E_{kl}(\delta,\epsilon)\boxtimes Y_{kl}$, and $XY=\sum (E_{ij}(\gamma,\delta)\boxtimes X_{ij})(E_{kl}(\delta,\epsilon)\boxtimes Y_{kl})=\sum_{i,j}E_{ij}(\gamma,\epsilon)\boxtimes\sum_{k=1}^{q} X_{ik}Y_{kj}$, as we have $(E_{ij}(\gamma,\delta)\boxtimes X_{ij})(E_{kl}(\delta,\epsilon)\boxtimes Y_{kl})=\delta_{jk}E_{il}(\gamma,\epsilon)\boxtimes X_{ij}Y_{kl}$, where $\delta_{jk}$ is the Kronecker delta.

Note that  $W(A,\lambda_{i})=\lambda_{i}I_{\alpha_{i}}+N_{\alpha_{i}}$ and $N_{\alpha_{i}}=\sum_{j=1}^{p_{i}-1} E_{j,j+1}(\alpha_{i})\boxtimes N_{\alpha_{ij},\alpha_{i,j+1}}$ is the nilpotent Weyr matrix of Weyr characteristic $\alpha_{i}$. 
For the sake of brevity, the operator $\varphi_{N_{\alpha_{i}}N_{\beta_{j}}}$ will be abbreviated as  $\varphi_{\alpha_{i}\beta_{j}}$.

\begin{theorem}\label{main}
Suppose that $A\in M_{m\times m}(\mathbb{C})$, $B\in M_{n\times n}(\mathbb{C})$.
Let $\varphi_{AB}$ be the operator of $M_{m\times n}(\mathbb{C})$ defined by 
$$\varphi_{AB}(X)=AX-XB\quad  \text{for  }  X\in M_{m\times n}(\mathbb{C}).$$
Assume that $\alpha_{i}=(\alpha_{i1},\alpha_{i2},\dots ,\alpha_{ip_i})=\omega(A,\lambda_{i})$ is the Weyr characteristic of $A$ associated with eigenvalue $\lambda_{i}$, for $i=1,2,\dots, a$,  and $\beta_{j}=(\beta_{j1},\beta_{j2},\dots ,\beta_{jq_j})=\omega(B,\mu_{j})$ is the Weyr characteristic of $B$ associated with eigenvalue $\mu_{j}$, for $ j=1,2,\dots,b$.
Let $N_{\alpha_{i}}$ be the nilpotent Weyr matrix of Weyr characteristic $\alpha_{i}$, and 
denote $\varphi_{N_{\alpha_{i}}N_{\beta_{j}}}$ with the short form $\varphi_{\alpha_{i}\beta_{j}}$.
Then for each positive integer $k$,
$\operatorname{dim}\operatorname{ker}\varphi_{AB}^{k}=\sum_{i=1}^{a}\sum_{j=1}^{b}\delta_{\lambda_{i},\mu_{j}}\operatorname{dim} \operatorname{ker} \varphi_{\alpha_{i}\beta_{j}}^{k}$ is
\[
\sum_{i=1}^{a}\sum_{j=1}^{b}\delta_{\lambda_{i},\mu_{j}}\bigg(\sum_{r=2}^{k}\sum_{l=0}^{\min\{p_{i},q_{j}\}}\alpha_{i,r+l}(\beta_{j,1+l}-\beta_{j,2+k-r+l})+\sum_{l=0}^{\min\{p_{i},q_{j}\}}\alpha_{i,1+l}\sum_{s=1}^{k}\beta_{j,s+l}\bigg)
\]
where $\alpha_{i,l}=0$ if $l>p_{i}$ and $\beta_{j,l}=0$ if $l>q_{j}$.

A basis of $\operatorname{ker}\varphi_{W_{A}W_{B}}^{k}$ is
$$E_{ij}(\alpha, \beta)\boxtimes\left(\sum_{t=0}^{l}x_{kt}E_{r+t,s+t}(\alpha_{i},\beta_{j})\boxtimes Z_{r+l,s+l}(ij)\right)$$ 
where $(x_{k0}, x_{k1},\cdots,x_{kl})$ is the solution of the formula (\ref{formula}) in Lemma \ref{coefficient} for  $\varphi_{p_{i}q_{j}}^{k-1}$, and $Z_{r+l,s+l}(ij)$ runs over $\{e_{uv}\mid 1\leq u\leq \alpha_{i,r+l}, \beta_{j,1+k-r+s+l}+1\leq v\leq \beta_{j,s+l}\}$,
and $1\leq i\leq a$, $1\leq j\leq b$ such that $\lambda_{i}=\mu_{j}$, $2\leq r\leq \min\{p_{i},k\}$, $0\leq l\leq \min\{p_{i}-r,q_{j}-1\}$ when $s=1$, and $1\leq s\leq q_{j}$, $0\leq l\leq \min\{p_{i}-1,q_{j}-s\}$ when $r=1$.

Furthermore, the set of the eigenvalues of the operator $\varphi_{AB}$ is $\{\lambda_{i}-\mu_{j}\mid 1\leq i\leq a, 1\leq j\leq b\}$.
If the Weyr characteristic of the operator $\varphi_{AB}$ associated with an eigenvalue $\lambda$ is $\omega=(\omega_{1},\omega_{2},\cdots,\omega_{q})$,
then we have
\[
\omega_{k}=\sum_{i=1}^{a}\sum_{j=1}^{b}\delta_{\lambda_{i}-\lambda,\mu_{j}}\sum_{l=0}^{\min\{p_{i},q_{j}\}}\bigg(\alpha_{i,1+l}\beta_{j,k+l}+\sum_{r=2}^{k}\alpha_{i,r+l}(\beta_{j,1+k-r+l}-\beta_{j,2+k-r+l})\bigg)
\]
and the index $q$ of the eigenvalue $\lambda$ of $\varphi_{AB}$ equals $\max\{p_{i}+q_{j}-1\mid \lambda_{i}-\mu_{j}=\lambda\}$.
So the operator $\varphi_{AB}$ is characterised within similarity by $\operatorname{dim}\operatorname{ker}\varphi_{A-\lambda I, B}^{k}$, for $1\leq k\leq mn$, and for each eigenvalue $\lambda=\lambda_{i}-\mu_{j}$ of $\varphi_{AB}$.
\end{theorem}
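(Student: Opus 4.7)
The plan is to reduce to the case of nilpotent Weyr matrices via the Weyr canonical form theorem, and then to perform a careful block-matrix analysis using the $E_{ij}(\gamma,\delta)\boxtimes-$ calculus set up in the introduction. After conjugating $A$ and $B$ into Weyr form, one has $\dim\ker\varphi_{AB}^{k}=\dim\ker\varphi_{W_AW_B}^{k}$. Decomposing $X$ according to the block structures $(\alpha_i)$ and $(\beta_j)$, the operator $\varphi_{W_AW_B}$ acts block-wise as $\varphi_{W_A(\lambda_i),W_B(\mu_j)}$ on the $(i,j)$ outer block, and when $\lambda_i\neq\mu_j$ this is a nilpotent perturbation of the nonzero scalar $\lambda_i-\mu_j$ and hence invertible; so any element of the kernel has its $(i,j)$ block equal to zero. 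This produces the $\delta_{\lambda_i,\mu_j}$ factor and leaves the core task of computing $\dim\ker\varphi_{\alpha\beta}^{k}$ for the nilpotent operator attached to single Weyr characteristics $\alpha,\beta$.

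For this nilpotent problem I would write $X=\sum_{r,s}E_{rs}(\alpha,\beta)\boxtimes X_{rs}$ and use the expansions $N_\alpha=\sum_{i}E_{i,i+1}(\alpha)\boxtimes N_{\alpha_i,\alpha_{i+1}}$ together with the analogue for $N_\beta$ and the block multiplication rule $(E_{ij}\boxtimes U)(E_{kl}\boxtimes V)=\delta_{jk}E_{il}\boxtimes UV$, to expand $\varphi_{\alpha\beta}^{k}(X)=\sum_{a+b=k}\binom{k}{a}(-1)^{b}N_\alpha^{a}XN_\beta^{b}$ (the binomial theorem applies since left- and right-multiplications commute). The condition $\varphi_{\alpha\beta}^{k}(X)=0$ then translates into a system of linear relations coupling the entries of neighbouring blocks $X_{rs}$, whose structure is controlled by Lemma \ref{kernal-image} on the intertwining, kernels, and ranges of the shift matrices $N_{\alpha_i,\alpha_{i+1}}$ and $N_{\beta_j,\beta_{j+1}}$.

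The main obstacle is to solve this coupled system and parametrise a basis cleanly. The recursion that governs propagation along a diagonal of the inner block array is precisely the one whose solutions Lemma \ref{coefficient} produces: a basis element is built by choosing a top block $Z_{r+l,s+l}(ij)$ freely and then filling in the blocks at positions $(r+t,s+t)$ for $t=0,\ldots,l-1$ as scalar multiples $x_{kt}Z$ determined by the lemma. The allowed range of $Z$ is dictated by Lemma \ref{kernal-image}: the rows are unrestricted while the columns must lie outside the range of the relevant power of the shift, giving exactly $\{e_{uv}\mid 1\leq u\leq \alpha_{i,r+l},\ \beta_{j,1+k-r+s+l}+1\leq v\leq \beta_{j,s+l}\}$. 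Splitting into the boundary cases $s=1$ with $2\leq r\leq\min\{p_i,k\}$ and $r=1$ with $1\leq s\leq q_j$ avoids double counting; linear independence is immediate from the disjoint support of the basis elements in different $(i,j,r,s,l)$-sectors, while spanning comes from the fact that propagation by $x_{kt}$ exhausts all solutions of the diagonal recursion. Counting free parameters and applying the telescoping identity $\sum_{s}(\beta_{j,s+l}-\beta_{j,k+s+l})=\sum_{s=1}^{k}\beta_{j,s+l}$ then yields the two sums in the dimension formula.

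For the remaining assertions, I would use the identity $\varphi_{AB}-\lambda\cdot\mathrm{id}=\varphi_{A-\lambda I,B}$; the Weyr characteristics of $A-\lambda I$ are still $\alpha_i$ but its eigenvalues are $\lambda_i-\lambda$, so the dimension formula shows that $\varphi_{AB}-\lambda\cdot\mathrm{id}$ is singular iff $\lambda=\lambda_i-\mu_j$ for some $(i,j)$, giving the eigenvalue set. Taking successive differences $\dim\ker\varphi_{A-\lambda I,B}^{k}-\dim\ker\varphi_{A-\lambda I,B}^{k-1}$ in the dimension formula simplifies directly to the stated $\omega_k$. The index $q$ is the smallest $k$ beyond which every summand of $\omega_k$ vanishes; inspection shows each $(i,j)$-contribution is positive exactly when $k\leq p_i+q_j-1$, so $q=\max\{p_i+q_j-1 : \lambda_i-\mu_j=\lambda\}$. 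Similarity invariance is then the standard fact that a linear operator on a finite-dimensional complex vector space is determined up to similarity by its Weyr structure, i.e., by the nullities $\dim\ker(\cdot-\lambda\cdot\mathrm{id})^{k}$ at each eigenvalue.
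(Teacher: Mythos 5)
Your overall architecture coincides with the paper's: pass to Weyr canonical forms, observe that the outer $(i,j)$ block of the kernel vanishes when $\lambda_{i}\neq\mu_{j}$ (your ``nonzero scalar plus nilpotent, hence invertible'' argument is exactly the content of Lemma \ref{lemma1}), reduce to the nilpotent operators $\varphi_{\alpha_{i}\beta_{j}}$, construct basis elements by choosing a free top block and propagating it along an inner diagonal with the coefficients supplied by Lemma \ref{coefficient}, and then obtain the eigenvalues, the Weyr characteristic $\omega_{k}$ by differencing, the index, and the similarity characterisation exactly as the paper does. None of that part raises any concern.

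The one genuine gap is the completeness of your basis of $\operatorname{ker}\varphi_{\gamma\delta}^{k}$, which is the heart of the theorem. Linear independence and membership in the kernel are fine (disjoint supports, together with the computation of Lemma \ref{stratification}), but ``spanning comes from the fact that propagation by $x_{kt}$ exhausts all solutions of the diagonal recursion'' is an assertion, not an argument. Your binomial expansion of $\varphi_{\gamma\delta}^{k}(X)=0$ does show that the equations decouple along the inner diagonals $r-s=\mathrm{const}$, but it does not by itself show that every solution of the coupled system on a given diagonal is a linear combination of propagated top blocks; what is missing is the matching \emph{upper} bound on $\operatorname{dim}\operatorname{ker}\varphi_{\gamma\delta}^{k}$. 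The paper supplies this through Lemma \ref{lemma2}, namely the filtration $\operatorname{dim}\operatorname{ker}\varphi^{\ell+1}=\operatorname{dim}\operatorname{ker}\varphi^{\ell}+\operatorname{dim}(\operatorname{ker}\varphi\cap\operatorname{im}\varphi^{\ell})$: one identifies $\operatorname{ker}\varphi_{\gamma\delta}\cap\operatorname{im}\varphi_{\gamma\delta}^{\ell}$ inside the explicit basis of $\operatorname{ker}\varphi_{\gamma\delta}$ from Lemma \ref{kernal-image}, and Lemma \ref{stratification} exhibits the candidate elements precisely as $\varphi^{\ell}$-preimages of that intersection, so that Lemma \ref{lemma2} converts the construction into a basis of $\operatorname{ker}\varphi_{\gamma\delta}^{\ell+1}$ by induction on $\ell$. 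You should either import this inductive step or replace it with an honest rank computation for your diagonal system; once that is done, the free-parameter count and the telescoping identity give the stated dimension formula, and the remainder of your argument goes through.
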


First, we give a specific example.
\begin{example}
Let 
\[
A=\begin{pmatrix}
4&2&0& 2& 2 \\
 1& 3& 1& 0& 1 \\
 -2& -2& 1& -1& -2 \\
 -1& -2& -1& 1& 0\\ 
-1& -1& 0& -1& 1\\
\end{pmatrix}
\mbox{ and }
B=\begin{pmatrix}
5& 4& 2& 3& 1 \\ -1& 0& -1& -1& -1\\-2& -4& 0& -2& -2 \\-1& 0& 0& 1& 1 \\ 2& 4& 2& 2& 4 
 \end{pmatrix}
\]
Then each has $ \lambda=2$ as its sole eigenvalue.
\[
\begin{tabular}{|c|c|}
\hline
index of $A-2I$ & 3\\
\hline
$\operatorname{nullity}(A-2I)$&2
\\
$\operatorname{nullity}(A-2I)^{2}$&4\\
$\operatorname{nullity}(A-2I)^{3}$&5\\
\hline
Weyr structure of $A$\;&\;$(2,2,1)$\;\\
\hline
\end{tabular}
\qquad\quad
\begin{tabular}{|c|c|}
\hline
index of $B-2I$ & 2\\
\hline
$\operatorname{nullity}(B-2I)$&3\\
$\operatorname{nullity}(B-2I)^{2}$&5\\
\hline
Weyr structure of $B$\;&\;$(3,2)$\;\\
\hline
\end{tabular}
\]
Thus the Weyr characteristics of $A$ and $B$ are $(2,2,1)$ and $(3,2)$ respectively.
Hence, using the notations as in Theorem \ref{main}, we have $m=n=5$, $a=b=1$, $\lambda_{1}=\mu_{1}=2$, $\alpha_{1}=(2,2,1)$, $\beta_{1}=(3,2)$, $p_{1}=3$, $q_{1}=2$.
By Theorem \ref{main}, 
\[\operatorname{dim}\operatorname{ker}\varphi_{AB}^{k}=\sum_{r=2}^{k}\sum_{l=0}^{2}\alpha_{1,r+l}(\beta_{1,1+l}-\beta_{1,2+k-r+l})+\sum_{l=0}^{2}\alpha_{1,1+l}\sum_{s=1}^{k}\beta_{1,s+l}
\]
Thus 
\[\operatorname{dim}\operatorname{ker}\varphi_{AB}^{}=\sum_{l=0}^{2}\alpha_{1,1+l}\beta_{1,1+l}=\sum_{l=0}^{1}\alpha_{1,1+l}\beta_{1,1+l}=2\times 3+2\times 2=10\]
and
\[
\begin{split}
 \operatorname{dim}\operatorname{ker}\varphi_{AB}^{2}&=\sum_{l=0}^{2}\alpha_{1,2+l}(\beta_{1,1+l}-\beta_{1,2+l})+\sum_{l=0}^{2}\alpha_{1,1+l}\sum_{s=1}^{2}\beta_{1,s+l}\\
&=\sum_{l=0}^{1}\alpha_{1,2+l}(\beta_{1,1+l}-\beta_{1,2+l})+\sum_{l=0}^{1}\alpha_{1,1+l}\sum_{s=1}^{2}\beta_{1,s+l}\\
&=\alpha_{12}(\beta_{11}-\beta_{12})+\alpha_{13}(\beta_{12}-\beta_{13})+\alpha_{11}(\beta_{11}+\beta_{12})+\alpha_{12}(\beta_{12}+\beta_{13})\\
&=2(3-2)+1(2-0)+2(3+2)+2(2+0)\\
&=18
\end{split}
\]
Similarly, we have 
$\operatorname{dim}\operatorname{ker}\varphi_{AB}^{3}=23$, $\operatorname{dim}\operatorname{ker}\varphi_{AB}^{4}=25$.
Since $\lambda=0$ is the unique eigenvalue of the operator $\varphi_{AB}$, it follows that the Weyr characteristic of $\varphi_{AB}$ is $(10,8,5,2)$, which can also be calculated directly by the formula $\omega_{k}$ and $q$ in Theorem \ref{main}. 

A basis of $\operatorname{ker}\varphi_{W_{A}W_{B}}^{k}$ is given below, where $1\leq u\leq \alpha_{1,r+l}$, and
$1\leq v\leq \beta_{1,s+l}$ or $v=3$ when $k-r+s+l=1$.
\[
\begin{tabular}{|c|c|c|c|c|c|c|c|}
\hline
 \multirow{2}*{$(r,s,l)$} & \multirow{2}*{basis} & \multirow{2}*{$\alpha_{1,r+l}$}& \multirow{2}*{$ \beta_{1,s+l}$}&\multicolumn{4}{c|}{$k$}\\
\cline{5-8} 
&&&&1&2&3&4\\
\hline
(1,2,0)&$E_{12}\boxtimes e_{uv}$&2&2&&&&\\
\hline
(1,1,0)&$E_{11}\boxtimes e_{uv}$&2&3&$v=3$&&&\\
\hline
(1,1,1)&$(E_{11}+E_{22})\boxtimes e_{uv}$&2&2&&&&\\
\hline
(2,1,0)&$E_{21}\boxtimes e_{uv}$&2&3&---&$v=3$&&\\
\hline
(2,1,1)&$(E_{21}+2E_{32})\boxtimes e_{uv}$&1&2&---&&&\\
\hline
(3,1,0)&$E_{31}\boxtimes e_{uv}$&1&3&---&---&$v=3$&\quad\quad\quad \\
\hline
\end{tabular}
\]
For example, 
$e_{14},e_{15},e_{24},e_{25},e_{13},e_{23},e_{11}+e_{34},e_{12}+e_{35},e_{21}+e_{44},e_{22}+e_{45}$ form a basis of $\operatorname{ker}\varphi_{W_{A}W_{B}}$, and
$e_{14},e_{15},e_{24},e_{25},e_{11},e_{12},e_{13},e_{21},e_{22},e_{23},e_{11}+e_{34},e_{12}+e_{35},e_{21}+e_{44},e_{22}+e_{45}$, $e_{33}, e_{43}, e_{31}+2e_{54},e_{32}+2e_{55}$ form a basis of $\operatorname{ker}\varphi_{W_{A}W_{B}}^{2}$. Here each $e_{ij}$ is of size $5\times 5$.

Note that $E_{ij}\boxtimes e_{uv}$ in the table is the $3\times 2$ block matrix of type $(\alpha_{1},\beta_{1})$ all of whose blocks are zero except
$(i,j)$ block is $\alpha_{1i}\times \beta_{1j}$ matrix $e_{uv}$.
Here $(E_{21}+2E_{32})\boxtimes e_{uv}$ means  that $E_{21}\boxtimes e_{uv}+E_{32}\boxtimes 2e_{uv}$.
\qed
\end{example}
 
The main result in Theorem \ref{main} is exactly equivalent to giving the Weyr characteristic (Weyr structure)  for every eigenvalue of the operator $\varphi_{AB}$. 
Since knowing the Weyr structure of a nilpotent matrix is equivalent to knowing the nullities of its powers, see \cite[Proposition 2.2.3]{MCV}. So in effect the main result provides parameters that determine the operator within similarity. That is  to say, we obtain a complete set of similarity invariants of the operator.
More precisely, the operator $\varphi_{AB}$ is similar to $\varphi_{CD}$ if and only if 
$\operatorname{dim}\operatorname{ker}\varphi_{(A-\lambda I), B}^{k}=\operatorname{dim}\operatorname{ker}\varphi_{(C-\lambda I), D}^{k}$ for each eigenvalues $\lambda$ of  $\varphi_{AB}$ and $k \in\mathbb{N}$, where  $A, C\in M_{m\times m}(\mathbb{C})$, and $B, D\in M_{n\times n}(\mathbb{C})$. This should be of interest to a number of people including physicists.

\section{Preliminaries}
 
\begin{lemma}[{\cite[Lemma 2.1]{LLWX}}]\label{lemma1}
Let $\sigma(A)$ be the set of eigenvalues of a square matrix $A$. If $\sigma (A)\cap \sigma (B)= \emptyset$, then $\operatorname{ker} \varphi_{AB}^{\ell}=0$ for all $\ell\in \mathbb{N}$.
\end{lemma}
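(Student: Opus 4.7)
The plan is to reduce the claim, for any $\ell\in\mathbb{N}$, to the base case $\ell=1$, which is the classical Sylvester observation that $AX=XB\Rightarrow X=0$ when the spectra are disjoint. Since $\varphi_{AB}$ is an endomorphism of the finite-dimensional space $M_{m\times n}(\mathbb{C})$, once $\varphi_{AB}$ is shown to be injective every power $\varphi_{AB}^{\ell}$ is automatically injective as well. Equivalently, I would induct on $\ell$: given $\varphi_{AB}^{\ell}(X)=0$ with $\ell\geq 2$, set $Y:=\varphi_{AB}^{\ell-1}(X)$; then $\varphi_{AB}(Y)=0$ forces $Y=0$ by the base case, so $X\in\operatorname{ker}\varphi_{AB}^{\ell-1}$ and the induction hypothesis yields $X=0$.

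For the base case I would exploit Cayley--Hamilton together with the disjointness hypothesis. From $AX=XB$ one obtains $A^{k}X=XB^{k}$ for every $k\geq 0$ by a direct induction, and hence $p(A)X=Xp(B)$ for every polynomial $p\in\mathbb{C}[t]$. Choosing $p=\chi_{A}$, the characteristic polynomial of $A$, Cayley--Hamilton gives $\chi_{A}(A)=0$, so $X\chi_{A}(B)=0$. The eigenvalues of $\chi_{A}(B)$ are precisely $\{\chi_{A}(\mu):\mu\in\sigma(B)\}$, and by the disjointness hypothesis no $\mu\in\sigma(B)$ is a root of $\chi_{A}$; hence $\chi_{A}(B)$ is invertible and $X=0$.

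There is essentially no obstacle beyond organising these two textbook ingredients. An alternative route, closer in spirit to the rest of the paper, is to write $\varphi_{AB}=L_{A}-R_{B}$ where $L_{A}X=AX$ and $R_{B}X=XB$; these two endomorphisms of $M_{m\times n}(\mathbb{C})$ commute by associativity, can be simultaneously upper-triangularised over $\mathbb{C}$, and satisfy $\sigma(L_{A})\subseteq\sigma(A)$ and $\sigma(R_{B})\subseteq\sigma(B)$ (via the operator identities $p(L_{A})=L_{p(A)}$, $p(R_{B})=R_{p(B)}$ combined with Cayley--Hamilton). The eigenvalues of $\varphi_{AB}$ therefore lie in $\{\lambda-\mu:\lambda\in\sigma(A),\,\mu\in\sigma(B)\}$, which excludes $0$ under the hypothesis, forcing $\varphi_{AB}$ to be invertible and hence $\operatorname{ker}\varphi_{AB}^{\ell}=0$ for every $\ell$.
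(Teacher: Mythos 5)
Your argument is correct. Note that the paper does not reprove this lemma but imports it from \cite{LLWX}; the proof there (and the classical one going back to Sylvester) is exactly your base-case argument --- propagate $AX=XB$ to $p(A)X=Xp(B)$, apply Cayley--Hamilton to get $X\chi_{A}(B)=0$, and use disjointness of spectra to invert $\chi_{A}(B)$ --- combined with the trivial observation that an injective endomorphism of a finite-dimensional space has injective powers. Both your main route and your alternative via $\varphi_{AB}=L_{A}-R_{B}$ are sound.
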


\begin{lemma}[{\cite[Lemma 2.2]{LLWX}}]\label{lemma2}
Let $\varphi$ be an operator of a finite dimensional vector space. Then the union of the preimage of a basis of $\operatorname{ker}\varphi\cap \operatorname{im}\varphi^{\ell}$ under $\varphi^{\ell}$ and a basis of $\operatorname{ker}\varphi^{\ell}$ form a basis of $\operatorname{ker}\varphi^{\ell+1}$ for all $\ell\in \mathbb{N}$. Consequently, 
$$\operatorname{dim}(\operatorname{ker}\varphi^{\ell+1})=\operatorname{dim}(\operatorname{ker}\varphi^{\ell})+\operatorname{dim}(\operatorname{ker}\varphi\cap \operatorname{im}\varphi^{\ell})$$
and
$$\operatorname{dim}(\operatorname{ker}\varphi^{\ell})=\sum_{i=0}^{\ell-1}\operatorname{dim}(\operatorname{ker}\varphi\cap \operatorname{im}\varphi^{i}).$$
\end{lemma}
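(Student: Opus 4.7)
The plan is to establish the basis statement via a short-exact-sequence argument, from which both dimension formulas follow immediately. The central observation is that $\varphi^{\ell}$, restricted to $\operatorname{ker}\varphi^{\ell+1}$, induces an isomorphism
\[
\operatorname{ker}\varphi^{\ell+1}/\operatorname{ker}\varphi^{\ell} \;\xrightarrow{\;\sim\;}\; \operatorname{ker}\varphi \cap \operatorname{im}\varphi^{\ell},
\]
so the desired basis of $\operatorname{ker}\varphi^{\ell+1}$ is exactly a lift of a basis of the right-hand side, together with any basis of $\operatorname{ker}\varphi^{\ell}$.

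First I would verify the set-theoretic facts underlying this isomorphism. If $x \in \operatorname{ker}\varphi^{\ell+1}$, then $\varphi(\varphi^{\ell}(x))=0$ and $\varphi^{\ell}(x)\in\operatorname{im}\varphi^{\ell}$, so $\varphi^{\ell}(\operatorname{ker}\varphi^{\ell+1})\subseteq\operatorname{ker}\varphi\cap\operatorname{im}\varphi^{\ell}$. Conversely, given $y\in\operatorname{ker}\varphi\cap\operatorname{im}\varphi^{\ell}$, write $y=\varphi^{\ell}(x)$; then $\varphi^{\ell+1}(x)=\varphi(y)=0$, so $x\in\operatorname{ker}\varphi^{\ell+1}$ and the restricted map is surjective. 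Its kernel is $\operatorname{ker}\varphi^{\ell+1}\cap\operatorname{ker}\varphi^{\ell}=\operatorname{ker}\varphi^{\ell}$, since the chain $\operatorname{ker}\varphi^{\ell}\subseteq\operatorname{ker}\varphi^{\ell+1}$ is monotone.

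Next I would spell out the basis construction. Choose a basis $y_{1},\dots,y_{r}$ of $\operatorname{ker}\varphi\cap\operatorname{im}\varphi^{\ell}$; by the surjectivity just established, pick preimages $x_{1},\dots,x_{r}\in\operatorname{ker}\varphi^{\ell+1}$ with $\varphi^{\ell}(x_{i})=y_{i}$. Let $u_{1},\dots,u_{s}$ be a basis of $\operatorname{ker}\varphi^{\ell}$. I claim $\{x_{1},\dots,x_{r},u_{1},\dots,u_{s}\}$ is a basis of $\operatorname{ker}\varphi^{\ell+1}$. For linear independence, applying $\varphi^{\ell}$ to a relation $\sum a_{i}x_{i}+\sum b_{j}u_{j}=0$ yields $\sum a_{i}y_{i}=0$, forcing all $a_{i}=0$, after which the $b_{j}$ vanish by independence of the $u_{j}$. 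For spanning, any $w\in\operatorname{ker}\varphi^{\ell+1}$ satisfies $\varphi^{\ell}(w)\in\operatorname{ker}\varphi\cap\operatorname{im}\varphi^{\ell}$, so $\varphi^{\ell}(w)=\sum a_{i}y_{i}$ for suitable scalars $a_{i}$; then $w-\sum a_{i}x_{i}\in\operatorname{ker}\varphi^{\ell}$ and is therefore a combination of the $u_{j}$.

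Finally, the dimension formulas are immediate corollaries. Counting basis elements gives $\operatorname{dim}\operatorname{ker}\varphi^{\ell+1}=\operatorname{dim}\operatorname{ker}\varphi^{\ell}+\operatorname{dim}(\operatorname{ker}\varphi\cap\operatorname{im}\varphi^{\ell})$, and iterating this from the base case $\operatorname{ker}\varphi^{0}=0$ (equivalently, a simple induction on $\ell$) yields the telescoping sum $\operatorname{dim}\operatorname{ker}\varphi^{\ell}=\sum_{i=0}^{\ell-1}\operatorname{dim}(\operatorname{ker}\varphi\cap\operatorname{im}\varphi^{i})$. The argument is a clean first-isomorphism-theorem computation, so there is no substantive obstacle; the only point requiring care is that the lifts $x_{i}$ are non-canonical, and one must observe that the basis property of the union is independent of the specific preimages chosen, which is exactly what the linear-independence check above secures.
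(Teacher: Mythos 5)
Your proof is correct and complete: the surjectivity of $\varphi^{\ell}$ from $\operatorname{ker}\varphi^{\ell+1}$ onto $\operatorname{ker}\varphi\cap\operatorname{im}\varphi^{\ell}$ with kernel $\operatorname{ker}\varphi^{\ell}$, the lift-and-check basis argument, and the telescoping from $\operatorname{ker}\varphi^{0}=0$ are all carried out properly. The paper itself gives no proof of this lemma (it is quoted from \cite[Lemma 2.2]{LLWX}), and your first-isomorphism-theorem argument is exactly the standard one that reference uses, so there is nothing to correct.
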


Throughout the rest of this paper, we write $\gamma=(\gamma_{1}, \gamma_{2}, \dots , \gamma_{p})$ and $\delta=(\delta_{1}, \delta_{2}, \dots , \delta_{q})$ where $\gamma_{1}\geq \gamma_{2}\geq \dots\geq \gamma_{p}\geq 1$ and $\delta_{1}\geq \delta_{2}\geq \dots\geq \delta_{q}\geq 1$ are non-increasing sequences of positive integers. Let 
$$\Gamma=N_{\gamma}=\begin{pmatrix}
	0 & \Gamma_{12} &   &  & \\  
	& 0 & \Gamma_{23} &  &\\     
	&  & \ddots  &\ddots   &\\    
	&   &   &0 &\Gamma_{p-1,p} \\  
	&   &    & & 0 \\  
\end{pmatrix} 
\mbox{ and }
\Delta=N_{\delta}=\begin{pmatrix}
	0 & \Delta_{12} &   &  & \\  
	& 0 & \Delta_{23} &  &\\     
	&  & \ddots  &\ddots   &\\    
	&   &   &0 &\Delta_{q-1,q} \\  
	&   &    & & 0 \\  
\end{pmatrix} 
$$
where
$\Gamma_{ij} $ is the full-column-rank block $N_{\gamma_{i}\times \gamma_{j}}=\begin{pmatrix}
	I_{\gamma_{j}}  \\  
	0\\   
\end{pmatrix} $ of size $\gamma_{i}\times \gamma_{j}$ for $i< j$,
and $\Delta_{ij}=N_{\delta_{i}\times \delta_{j}}$.

Let $\varphi_{\gamma\delta}$ be the operator of $M_{m\times n}(\mathbb{C})$ defined by 
$$\varphi_{\gamma\delta}(X)=N_{\gamma} X-XN_{\delta}=\Gamma X-X\Delta\quad  \text{for  } X\in M_{m\times n}(\mathbb{C}).$$
Partition $X$ into $p\times q$ block matrix $(X_{i j})_{p\times q}$ of type $(\gamma,\delta)$, where the $(i,j)$ block  is $\gamma_{i}\times \delta_{j}$ matrix $X_{ij}$.  
Recall that $E_{ij}(\gamma,\delta)\boxtimes X_{ij}$ is the block matrix of type $(\gamma,\delta)$ all of whose blocks are zero except
$(i,j)$ block is $X_{ij}$. Then we have $X=\sum_{i,j} E_{ij}(\gamma,\delta)\boxtimes X_{ij}$.

\begin{lemma}\label{kernal-image}
Let $\varphi_{\gamma\delta}$ be the operator of $M_{m\times n}(\mathbb{C})$ defined by 
$$\varphi_{\gamma\delta}(X)=N_{\gamma} X-XN_{\delta}\quad  \text{for } X\in M_{m\times n}(\mathbb{C}).$$ where $\gamma=(\gamma_{1}, \gamma_{2}, \dots , \gamma_{p})$ and $\delta=(\delta_{1}, \delta_{2}, \dots , \delta_{q})$.
Then the formula for the dimension of $ \operatorname{ker}\varphi_{\gamma\delta}$ is
$$\operatorname{dim}(\operatorname{ker}\varphi_{\gamma\delta})=\sum_{i=1}^{\min \{p,q\}} \gamma_{i}\delta_{i}.$$
Moreover, 
$\sum_{l=0}^{q-j} E_{1+l, j+l}(\gamma,\delta)\boxtimes e_{kl}$ for $\max \{q-p,0 \}+1\leq j\leq q$, $1\leq k\leq \gamma_{q+1-j}$, $1\leq l\leq \delta_{q}$ and
 $\sum_{l=1}^{i} E_{l,j-i+l}(\gamma,\delta)\boxtimes e_{kl}$ for  $i\leq j< q$, $1\leq k\leq \gamma_{i}$, $\delta_{j+1}+1\leq l\leq \delta_{j}$ form a basis of $\operatorname{ker} \varphi_{\gamma\delta}$.
\end{lemma}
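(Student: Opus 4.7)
The plan is to attack the equation $\varphi_{\gamma\delta}(X) = 0$ via the block decomposition. Writing $N_\gamma = \sum_{i=1}^{p-1} E_{i,i+1}(\gamma) \boxtimes N_{\gamma_i,\gamma_{i+1}}$ and $N_\delta = \sum_{j=1}^{q-1} E_{j,j+1}(\delta) \boxtimes N_{\delta_j,\delta_{j+1}}$, together with $X = \sum_{i,j} E_{ij}(\gamma,\delta) \boxtimes X_{ij}$, and applying the block-product identity $(E_{ij}\boxtimes A)(E_{kl}\boxtimes B) = \delta_{jk} E_{il}\boxtimes AB$ recalled just before the lemma, the equation $N_\gamma X = X N_\delta$ becomes the system
$$N_{\gamma_i,\gamma_{i+1}} X_{i+1, j} \;=\; X_{i, j-1} N_{\delta_{j-1},\delta_j} \qquad (1\leq i\leq p-1,\;2\leq j\leq q),$$
together with the boundary cases $N_{\gamma_i,\gamma_{i+1}} X_{i+1,1} = 0$ and $X_{p, j-1} N_{\delta_{j-1},\delta_j} = 0$. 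Since $N_{\gamma_i,\gamma_{i+1}}$ stacks $\gamma_i - \gamma_{i+1}$ zero rows below its argument while $N_{\delta_{j-1},\delta_j}$ returns the leftmost $\delta_j$ columns, this system says exactly that the upper-left $\gamma_{i+1}\times\delta_j$ corner of $X_{i, j-1}$ equals $X_{i+1, j}$ while the lower-left $(\gamma_i - \gamma_{i+1}) \times \delta_j$ corner of $X_{i, j-1}$ vanishes.

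Next I organise the blocks into down-right diagonal chains $\{X_{i_0+t,\, j_0+t}\}$ with top-left endpoint satisfying $i_0 = 1$ or $j_0 = 1$. Chains starting in the left column strictly below row $1$ are killed entirely: the boundary equation $N_{\gamma_{i_0-1},\gamma_{i_0}} X_{i_0,1} = 0$ forces $X_{i_0,1} = 0$ (since $N_{\gamma_{i_0-1},\gamma_{i_0}}$ has full column rank), and forward propagation through the chain equations then annihilates every subsequent block. For a chain starting at $(1, j_0)$, the nesting relation makes each successor block the upper-left corner of its predecessor, while the ``right extension'' of each block (entries $(k, l)$ with $1 \leq k \leq \gamma_i$ and $\delta_{j+1}+1 \leq l \leq \delta_j$, under the convention $\delta_{q+1} = 0$) remains freely choosable, the lower-left-zero condition being automatic from the nesting. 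Each block $X_{i,j}$ with $1 \leq i \leq \min(p, j)$ and $j \leq q$ therefore contributes $\gamma_i(\delta_j - \delta_{j+1})$ free parameters, and telescoping in $j$ yields
$$\dim \operatorname{ker} \varphi_{\gamma\delta} \;=\; \sum_{i=1}^{\min(p,q)}\gamma_i \sum_{j=i}^{q}(\delta_j - \delta_{j+1}) \;=\; \sum_{i=1}^{\min(p,q)}\gamma_i\delta_i.$$

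To produce the explicit basis I turn each free entry into a basis vector by backward propagation along its chain. Setting the $(k, l)$ entry of block $X_{i, j}$ to $1$ and all other free entries to $0$: since $k \leq \gamma_i \leq \gamma_t$ and $l \leq \delta_j \leq \delta_{j - i + t}$ for $1 \leq t \leq i$, this entry lies inside every upper-left corner along the preceding chain, so the nesting cascades the $1$ backward into each earlier block, while $l > \delta_{j+1}$ (or $j = q$) keeps the entry outside the next upper-left corner so nothing propagates forward. The resulting vectors are precisely the two families in the statement: $\sum_{t=0}^{q-j} E_{1+t,\, j+t}(\gamma,\delta)\boxtimes e_{k,l}$ for chains that reach column $q$, and $\sum_{t=1}^{i} E_{t,\, j-i+t}(\gamma,\delta)\boxtimes e_{k,l}$ for right-extension entries of interior blocks. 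Linear independence is immediate from the distinct ``endpoints'' $(i, j, k, l)$ (each vector carries its distinguished entry at $(k,l)$ in a specific block that does not appear in any other vector with the same label), and the count matches the dimension just computed. The main bookkeeping obstacle will be handling the two possible chain terminations (either $j = q$ or $i = p$) uniformly, which is what the convention $\delta_{q+1} = 0$ accomplishes by absorbing the boundary relation $X_{p, j-1}N_{\delta_{j-1}, \delta_j} = 0$ into the single right-extension description.
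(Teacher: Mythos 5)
Your proof is correct, but it pins down the dimension by a different mechanism than the paper. Both arguments start from the same block calculus, namely $\varphi_{\gamma\delta}(X)=\sum_{i,j}E_{ij}\boxtimes(\Gamma_{i,i+1}X_{i+1,j}-X_{i,j-1}\Delta_{j-1,j})$, and both end up exhibiting the same kernel vectors; but the paper never solves the system. It writes down explicit preimages showing $\operatorname{dim}\operatorname{im}\varphi_{\gamma\delta}\geq\sum_{i\neq j}\gamma_{i}\delta_{j}$, deduces $\operatorname{dim}\operatorname{ker}\varphi_{\gamma\delta}\leq\sum_{i=1}^{\min\{p,q\}}\gamma_{i}\delta_{i}$ by rank--nullity, and then checks that the listed kernel vectors are independent and sufficiently numerous --- a two-sided squeeze that avoids ever proving that every solution has the claimed form. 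You instead solve $N_{\gamma}X=XN_{\delta}$ outright: the diagonal-chain analysis (successor block equals the upper-left corner of its predecessor, lower-left corner vanishes, left-column chains below row one are annihilated because $\Gamma_{i-1,i}$ has full column rank) gives an exact parametrization of the kernel by the ``right extension'' entries, the dimension falls out by telescoping, and the basis is obtained by backward propagation of each free entry. Your route carries an extra burden at the surjectivity step --- you must argue that every non-free entry is either forced to vanish or propagates forward to a free entry, which your recursion does and which terminates since the row index increases --- but it buys the explicit nested-corner description of all solutions (essentially Belitskii's structure theorem for centralizers of Weyr blocks, which the paper only cites), whereas the paper's squeeze gets the dimension more cheaply at the cost of that structural information. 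One small point to tidy: the convention $\delta_{q+1}=0$ only handles chain termination at column $q$; termination at row $p$ with $j<q$ is handled by the separate boundary relation $X_{p,j}\Delta_{j,j+1}=0$, which forces the first $\delta_{j+1}$ columns of $X_{p,j}$ to vanish --- the two cases merely happen to leave the same free entries, which is the real reason the count is uniform.
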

\begin{proof}
Note that  $\gamma$ and $\delta$ are partitions of $m$ and $n$, respectively.
So $X$ can be partitioned as $X=(X_{ij})_{p\times q}$ of type $(\gamma,\delta)$, where $X_{ij}$ is  matrix of $\gamma_{i}\times\delta_{j}$.
Then $X=\sum_{i,j} E_{ij}(\gamma,\delta)\boxtimes X_{ij}$.
Let $\Gamma=N_{\gamma}, \Delta=N_{\delta}$, and let $\Gamma$ and $\Delta$ be  similarly partitioned as $\Gamma=(\Gamma_{ij})_{p\times p}$ of type $\gamma$ and $\Delta=(\Delta_{ij})_{q\times q}$ of type $\delta$.
Then $\Gamma=\sum_{k=1}^{p-1} E_{k,k+1}(\gamma)\boxtimes \Gamma_{k,k+1}$ and 
$\Delta=\sum_{l=1}^{q-1} E_{l,l+1}(\delta)\boxtimes \Delta_{l,l+1}$.
Recall that
\[
\begin{split}
 \varphi_{\gamma\delta}(E_{ij}\boxtimes X_{ij})&= (\sum_{k=1}^{p-1} E_{k,k+1}\boxtimes \Gamma_{k,k+1})(E_{ij}\boxtimes X_{ij})-(E_{ij}\boxtimes X_{ij})(\sum_{l=1}^{q-1} E_{l,l+1}\boxtimes \Delta_{l,l+1})\\
& =E_{i-1,j}\boxtimes \Gamma_{i-1,i}X_{ij} -E_{i,j+1}\boxtimes X_{ij}\Delta_{j,j+1}
 \end{split}
 \]
Then 
\[
\begin{split}
 \varphi_{\gamma\delta}(X)&= (\sum_{k=1}^{p-1} E_{k,k+1}\boxtimes \Gamma_{k,k+1})(\sum_{i,j} E_{ij}\boxtimes X_{ij})-(\sum_{i,j} E_{ij}\boxtimes X_{ij})(\sum_{l=1}^{q-1} E_{l,l+1}\boxtimes \Delta_{l,l+1})\\
 &=\sum_{i,j} (E_{i-1,j}\boxtimes \Gamma_{i-1,i}X_{ij} -E_{i,j+1}\boxtimes X_{ij}\Delta_{j,j+1})\\
 &=\sum_{i,j} E_{ij}\boxtimes (\Gamma_{i,i+1}X_{i+1,j}-X_{i,j-1}\Delta_{j-1,j})
\end{split}
 \]
Hence, we have
\[
\begin{split}
 \varphi_{\gamma\delta}((E_{1,j-i}+\cdots+E_{i,j-1})\boxtimes X_{ij})&= -E_{ij}\boxtimes X_{ij} \; \mbox{if } j>i\\
\varphi_{\gamma\delta}((E_{i-j+2,1}+\cdots+E_{i,j-1})\boxtimes X_{ij})&= (E_{i-j+1,1}-E_{ij})\boxtimes X_{ij} \; \mbox{if } 2\leq j\leq i\\
\varphi_{\gamma\delta} (E_{i,j-1}\boxtimes Y_{i,j-1})&= E_{i-1,j-1}\boxtimes Y_{i,j-1} \; \mbox{if }  2\leq j\leq i\\
\end{split}
 \]
for $X_{ij}\in \operatorname{span}\{e_{kl}\mid 1\leq k\leq \gamma_{i}, 1\leq l\leq \delta_{j}\}$ and $Y_{ij}\in \operatorname{span}\{e_{kl}\mid 1\leq k\leq \gamma_{i}, \delta_{j+1}+1\leq l\leq \delta_{j}\}$.

Since $E_{ij}\boxtimes e_{kl}$, $j>i$, $1\leq k\leq \gamma_{i}$, $1\leq l\leq \delta_{j}$;
$(E_{i-j+1 , 1}-E_{ij})\boxtimes e_{kl}$, $2\leq j\leq i$, $1\leq k\leq \gamma_{i}$, $1\leq l\leq \delta_{j}$;
$E_{ij}\boxtimes e_{kl}$, $j\leq i<p$, $1\leq k\leq \gamma_{i+1}$, $\delta_{j+1}+1\leq l\leq \delta_{j}$ are linearly independent, so they form a basis of a subspace of $\operatorname{im} \varphi_{\gamma\delta}$ of dimension $\sum_{i\neq j}\gamma_{i}\delta_{j}$. 
Hence
$\operatorname{dim}(\operatorname{im}\varphi_{\gamma\delta})\geq\sum_{i\neq j}\gamma_{i}\delta_{j}$.
Since we have $\operatorname{dim}(\operatorname{im} \varphi_{\gamma\delta})+ \operatorname{dim}(\operatorname{ker}\varphi_{\gamma\delta})=mn$,
it follows that $\operatorname{dim}(\operatorname{ker}\varphi_{\gamma\delta} )\leq\sum_{i=1}^{\min \{p,q\}} \gamma_{i}\delta_{i}$.
Moreover, 
\[
\begin{split}
 \varphi_{\gamma\delta}(E_{1j}+E_{2 , j+1}+\dots +E_{q+1-j , q}\boxtimes X_{q+1-j , q})&=0 \quad \mbox{for }\max \{q-p,0 \}+1\leq j\leq q\\
\varphi_{\gamma\delta}(E_{1 , j-i+1}+E_{2 , j-i+2}+\dots +E_{ij}\boxtimes Y_{ij})&=0 \quad \mbox{for } i\leq j< q\
\end{split}
 \]
where $X_{ij}\in \operatorname{span}\{e_{kl}\mid 1\leq k\leq \gamma_{i}, 1\leq l\leq \delta_{j}\}$, and $Y_{ij}\in\operatorname{span}\{e_{kl}\mid 1\leq k\leq \gamma_{i}, \delta_{j+1}+1\leq l\leq \delta_{j}\}$. 

Since $(E_{1j}+E_{2 , j+1}+\dots +E_{q+1-j , q})\boxtimes e_{kl}$ for $\max \{q-p,0 \}+1\leq j\leq q$, $1\leq k\leq \gamma_{q+1-j}$, $1\leq l\leq \delta_{q}$;
$(E_{1 , j-i+1}+E_{2 , j-i+2}+\dots +E_{ij})\boxtimes e_{kl}$ for  $i\leq j< q$, $1\leq k\leq \gamma_{i}$, $\delta_{j+1}+1\leq l\leq \delta_{j}$
are linearly independent, so they form a basis of a subspace of $\operatorname{ker} \varphi_{\gamma\delta}$ of dimension $\sum_{i=1}^{\min \{p,q\}} \gamma_{i}\delta_{i}$. Hence they form  a basis of $\operatorname{ker}\varphi_{\gamma\delta}$ by comparing dimensions, where $E_{ij}\boxtimes-$ is of type $(\gamma,\delta)$.
\end{proof}

Belitskii was the first to observe the nice block upper triangular form of the matrices that centralize a given nilpotent matrix in Weyr form; see \cite[Proposition 3.2.2]{MCV}, for general matrix in Weyr form see \cite[Theorem 3.4.2.5 or Theorem 3.4.2.10]{HJ}.
Lemma \ref{kernal-image} recovers these results, and Corollary \ref{centralizer-dimension} gives the formula for the dimension of the centralizer space of a square matrix $A$.
This appears to have been first recorded by O’Meara and Vinsonhaler in \cite[Proposition 4.5 and Remark 4.6]{OV} and later in  \cite[Proposition 3.2.2]{MCV}.

\begin{corollary}\label{centralizer-dimension}
Suppose that $A\in M_{m\times m}(\mathbb{C})$, and 
$\alpha_{i}=(\alpha_{i1},\alpha_{i2},\dots ,\alpha_{ip_i})=\omega(A,\lambda_{i})$ is the Weyr characteristic of $A$ associated with $\lambda_{i}$ and $\lambda_{1},\lambda_{2},\dots,\lambda_{a}$ are distinct eigenvalues of $A$.
Then the formula for the dimension of the centralizer space of $A$ is 
\[
\sum_{i=1}^{a}\sum_{j=1}^{p_{i}} \alpha_{ij}^{2}.
\]
\end{corollary}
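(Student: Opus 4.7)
The plan is to recognize the centralizer of $A$ as $\ker \varphi_{AA}$ and then reduce directly to Lemma~\ref{kernal-image} by passing to the Weyr canonical form. First I would pick $P \in \operatorname{GL}_m(\mathbb{C})$ with $P^{-1}AP = W_A$; since the map $X \mapsto P^{-1}XP$ is a linear isomorphism of $M_{m\times m}(\mathbb{C})$ carrying $\ker \varphi_{AA}$ onto $\ker \varphi_{W_A W_A}$, it suffices to compute $\dim \ker \varphi_{W_A W_A}$.

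Next I would exploit the block-diagonal structure $W_A = W_A(\lambda_1) \oplus \cdots \oplus W_A(\lambda_a)$. Partitioning $X$ conformally into blocks $X_{ij}$ (with $X_{ij}$ of size depending on the Weyr blocks for $\lambda_i$ and $\lambda_j$), the equation $W_A X = X W_A$ decouples into the conditions $W_A(\lambda_i) X_{ij} = X_{ij} W_A(\lambda_j)$, i.e.\ $X_{ij} \in \ker \varphi_{W_A(\lambda_i), W_A(\lambda_j)}$. For $i \neq j$ the eigenvalues $\lambda_i$ and $\lambda_j$ are distinct, so Lemma~\ref{lemma1} forces $X_{ij} = 0$. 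Hence
\[
\dim \ker \varphi_{W_A W_A} = \sum_{i=1}^{a} \dim \ker \varphi_{W_A(\lambda_i), W_A(\lambda_i)}.
\]

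For each diagonal block, the key observation is that the scalar part $\lambda_i I_{\alpha_i}$ is central and cancels: for any $X$,
\[
W_A(\lambda_i) X - X W_A(\lambda_i) = (\lambda_i I_{\alpha_i} + N_{\alpha_i}) X - X (\lambda_i I_{\alpha_i} + N_{\alpha_i}) = N_{\alpha_i} X - X N_{\alpha_i} = \varphi_{\alpha_i \alpha_i}(X).
\]
Therefore $\ker \varphi_{W_A(\lambda_i), W_A(\lambda_i)} = \ker \varphi_{\alpha_i \alpha_i}$, and I can now apply Lemma~\ref{kernal-image} with $\gamma = \delta = \alpha_i$ (so $p = q = p_i$) to obtain $\dim \ker \varphi_{\alpha_i \alpha_i} = \sum_{j=1}^{p_i} \alpha_{ij}^2$. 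Summing over $i$ yields the claimed formula.

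There is essentially no obstacle here; the whole argument is a clean assembly of Lemma~\ref{lemma1}, the scalar-part cancellation, and the diagonal ($\gamma = \delta$) specialization of Lemma~\ref{kernal-image}. The only thing to state carefully is the block-decoupling of $\varphi_{W_A W_A}$, which is immediate from the block-diagonal form of $W_A$.
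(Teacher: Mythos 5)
Your proposal is correct and follows essentially the same route as the paper's proof: reduce to the Weyr form, decouple the off-diagonal blocks via Lemma~\ref{lemma1}, cancel the scalar part on the diagonal blocks, and apply Lemma~\ref{kernal-image} with $\gamma=\delta=\alpha_i$. The only cosmetic difference is that you spell out the conjugation isomorphism $X\mapsto P^{-1}XP$ where the paper simply says ``without loss of generality.''
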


\begin{proof}
Let $\varphi_{A}$ be the operator of $M_{m\times m}(\mathbb{C})$ defined by 
$$\varphi_{A}(X)=AX-XA\quad  \text{for  }  X\in M_{m\times m}(\mathbb{C}).$$
Then $X$ is in the centralizer space of $A$  if and only if $X\in \operatorname{ker}\varphi_{A}$.
Let $X$ be partitioned into $a\times a$ blocks $X=(X_{ij})_{a\times a}$. 
Without loss of generality, we may assume that 
$$A= \begin{pmatrix}
	\lambda_{1}I_{\alpha_{1}}+N_{\alpha_{1}} &   &   &   \\   
	&	\lambda_{2}I_{\alpha_{2}}+N_{\alpha_{2}}  &    &  \\  
	&   & \ddots  &  \\  
	&   &    &  \lambda_{a}I_{\alpha_{a}}+N_{\alpha_{a}} \\   
\end{pmatrix}   	$$
For any $i\neq j$, since $\lambda_{i}\neq \lambda_{j}$, it follows from Lemma \ref{lemma1}, $(\lambda_{i}I_{\alpha_{i}}+N_{\alpha_{i}})X_{ij}-X_{ij}(\lambda_{j}I_{\alpha_{j}}+N_{\alpha_{j}} )=0$ if and only if $X_{ij}=0$.
For each $i$, $(\lambda_{i}I_{\alpha_{i}}+N_{\alpha_{i}})X_{ii}-X_{ii}(\lambda_{i}I_{\alpha_{i}}+N_{\alpha_{i}})=0$ is equivalent to $N_{\alpha_{i}}X_{ii}-X_{ii}N_{\alpha_{i}}=0$.
By Lemma \ref{kernal-image}, 
the formula for the dimension of $ \operatorname{ker}\varphi_{\alpha_{i}\alpha_{i}}$ is
$$\operatorname{dim}(\operatorname{ker}\varphi_{\alpha_{i}\alpha_{i}})=\sum_{j=1}^{p_{i}} \alpha_{ij}^{2}.$$
Hence the formula for the dimension of $ \operatorname{ker}\varphi_{A}$ is
$$\sum_{i=1}^{a}\operatorname{dim}(\operatorname{ker}\varphi_{\alpha_{i}\alpha_{i}})=\sum_{i=1}^{a}\sum_{j=1}^{p_{i}} \alpha_{ij}^{2}.$$
which is also the formula for the dimension of the centralizer space of $A$.
\end{proof}

\begin{lemma}[{\cite[Lemma 2.9]{LLWX}}] \label{invertible}
	Let $l$, $m$, $n$ be positive integers. Let $A=(a_{ij})_{1\leq i,j\leq l}$ be a square $l\times l$ matrix  with $(i,j)$ entry $a_{ij}$ is the binomial coefficient $\tbinom{m+n}{m+i-j}$. Then
	$$\det \bigg[ \bigg(\tbinom{m+n}{m+i-j}\bigg)_{1\leq i,j\leq l} \bigg]=\prod_{i=1}^{l}\dfrac{(m+n+i-1)!(i-1)!}{(m+i-1)!(n+i-1)!}
	$$
	In particular, $\det \bigg[ \bigg(\tbinom{m+n}{m+i-j}\bigg)_{1\leq i,j\leq l} \bigg]\neq 0.$
\end{lemma}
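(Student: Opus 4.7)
The plan is to prove the determinant formula by induction on $l$. The base case $l=1$ is immediate: the lone entry $\binom{m+n}{m}=\tfrac{(m+n)!}{m!\,n!}$ coincides with the $i=1$ factor of the right-hand side product.

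For general $l$, I would take a combinatorial route via the Lindström-Gessel-Viennot lemma. First I would place source points $A_i=(-i,i)$ and sink points $B_j=(m-j,\,n+j)$ in $\mathbb{Z}^{2}$ for $1\leq i,j\leq l$. A lattice path from $A_i$ to $B_j$ using unit east and north steps has exactly $(m-j)-(-i)=m+i-j$ east steps and $(n+j)-i=n-i+j$ north steps, so its length is $m+n$ and the number of such paths is $\binom{m+n}{m+i-j}$, matching the matrix entries. Since the sources $\{A_i\}$ and sinks $\{B_j\}$ each lie on an antidiagonal and are listed in the same order along it, any non-identity permutation $\sigma$ forces the paths $A_i\to B_{\sigma(i)}$ to cross; the LGV lemma then yields
\[
\det\!\left[\binom{m+n}{m+i-j}\right]_{i,j=1}^{l}
\;=\;\#\bigl\{\,l\text{-tuples of pairwise non-intersecting paths }A_i\to B_i\,\bigr\}.
\]

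Next I would invoke the standard bijection between such non-intersecting path systems and plane partitions fitting inside an $l\times m\times n$ box, whose cardinality is given by MacMahon's box formula
\[
\prod_{i=1}^{l}\prod_{j=1}^{m}\prod_{k=1}^{n}\frac{i+j+k-1}{i+j+k-2}.
\]
Telescoping first in $k$ (collapsing each inner product to $\tfrac{i+j+n-1}{i+j-1}$) and then in $j$ rewrites this as $\prod_{i=1}^{l}\tfrac{(m+n+i-1)!\,(i-1)!}{(m+i-1)!\,(n+i-1)!}$, which is exactly the right-hand side of the lemma. The non-vanishing assertion then follows since every factor is a positive rational.

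The main obstacle, in my view, is verifying the non-crossing argument cleanly, i.e.\ confirming that the chosen source/sink configuration makes only the identity permutation contribute in LGV; once that is established, both the bijection with plane partitions and the telescoping simplification of MacMahon's product are routine. If a fully self-contained algebraic proof were preferred, one could instead perform Pascal-based column operations $C_j\leftarrow C_j-C_{j+1}$ and peel off the required factorial factor inductively, though the bookkeeping becomes noticeably more delicate.
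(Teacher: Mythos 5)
The paper itself offers no proof of this lemma: it is imported verbatim as \cite[Lemma 2.9]{LLWX}, so there is no in-text argument to compare yours against. On its own terms your proposal is correct. The lattice-path model is set up properly (a path from $(-i,i)$ to $(m-j,n+j)$ has $m+i-j$ east steps, and the convention $\tbinom{m+n}{r}=0$ for $r<0$ or $r>m+n$ takes care of empty path sets), sources and sinks lie on parallel antidiagonals in compatible order so only the identity permutation survives in Lindstr\"om--Gessel--Viennot, the resulting non-intersecting families biject with plane partitions in an $l\times m\times n$ box, and your telescoping of MacMahon's product $\prod_{k=1}^{n}\frac{i+j+k-1}{i+j+k-2}=\frac{i+j+n-1}{i+j-1}$ followed by the product over $j$ does land exactly on $\prod_{i=1}^{l}\frac{(m+n+i-1)!\,(i-1)!}{(m+i-1)!\,(n+i-1)!}$; positivity of every factor gives the non-vanishing claim. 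Two remarks. First, the opening announcement of ``induction on $l$'' is vestigial --- the LGV/MacMahon argument you actually give is a direct evaluation for all $l$ at once, so you should drop that framing. Second, and more substantively, you must be careful which proof of MacMahon's box formula you cite: the most common textbook derivation of that formula proceeds \emph{through} precisely the determinant $\det\bigl[\tbinom{m+n}{m+i-j}\bigr]$ via LGV, which would make your argument circular. To close the loop you need an independent proof of the box-counting product (e.g.\ via the bijection with semistandard tableaux of rectangular shape and the hook-content or Weyl dimension formula). The elementary alternative you sketch at the end --- Pascal-identity column operations peeling off factorial factors inductively --- is almost certainly closer in spirit to the computation in the cited source, and avoids this dependency at the cost of heavier bookkeeping.
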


\begin{lemma}\label{coefficient}
Let $\varphi_{pq}$ be the operator of $M_{p\times q}(\mathbb{C})$ defined by
\[\varphi_{pq}(X)=N_pX-XN_q, \mbox{ for $X\in M_{p\times q}(\mathbb{C})$. }\]
where $N_p=e_{12}+e_{23}+\cdots+e_{p-1,p}$ and $N_q=e_{12}+e_{23}+\cdots+e_{q-1,q}$.
Then there is a unique solution of the formula
\begin{equation}
\label{formula}
\varphi_{pq}^{k}(\sum_{j=0}^{l}x_{j}e_{r+j,s+j})=\sum_{j=0}^{l}e_{1+j,k-r+1+s+j}+\sum_{j>l}y_{j}e_{1+j,k-r+1+s+j}
\end{equation}
for some $y_{j}$, where $r=1$ or $s=1$, $0\leq l\leq \min\{p-r, q-s\}$, $k+l\leq q+r-s-1$.
\end{lemma}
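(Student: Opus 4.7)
The plan is to reduce the lemma to an $(l+1)\times(l+1)$ linear system in $x_0,\ldots,x_l$ whose coefficient matrix can be recognized as a signed version of the binomial matrix in Lemma \ref{invertible}.

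First, I would derive the explicit formula
\[\varphi_{pq}^{k}(e_{ab}) = \sum_{i=0}^{k}(-1)^{i}\binom{k}{i}\,e_{a-k+i,\,b+i}\]
by induction on $k$, starting from $\varphi_{pq}(e_{ab}) = e_{a-1,b} - e_{a,b+1}$ (with the conventions $e_{0,\cdot} = e_{\cdot,q+1} = 0$) and using Pascal's identity. The crucial observation is that every term $e_{a',b'}$ appearing in $\varphi_{pq}^{k}(e_{ab})$ satisfies $b' - a' = b - a + k$, so every term of $\varphi_{pq}^{k}\bigl(\sum_{j=0}^{l} x_{j}e_{r+j,s+j}\bigr)$ lies on the single diagonal $b' - a' = s - r + k$. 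This is precisely the diagonal that hosts the basis elements $e_{1+j,\,k-r+1+s+j}$ on the right-hand side of (\ref{formula}), so the asserted identity is purely a comparison of scalar coefficients along one diagonal.

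Next, I would extract the coefficient of $e_{1+j',\,k-r+1+s+j'}$ in $\varphi_{pq}^{k}(e_{r+j,s+j})$: the index match forces $i = k - r + 1 + j' - j$, producing the coefficient $(-1)^{k-r+1+j'-j}\binom{k}{k-r+1+j'-j}$. Demanding that (\ref{formula}) hold at $j' = 0,1,\ldots,l$ yields the $(l+1)\times(l+1)$ linear system
\[\sum_{j=0}^{l} x_{j}\,(-1)^{k-r+1+j'-j}\binom{k}{k-r+1+j'-j} \;=\; 1, \qquad 0 \le j' \le l,\]
and the values at $j' > l$ are then automatically read off as the $y_{j'}$. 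The hypothesis $k+l \le q+r-s-1$ is what guarantees that the largest targeted column index $k-r+1+s+l$ still lies in $[1,q]$, so the basis elements on the right-hand side are genuine.

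Finally, I would show invertibility of the coefficient matrix. After relabelling $i = j+1$, $j = j'+1$ so that $1 \le i,j \le l+1$, the sign factor $(-1)^{k-r+1+j-i} = (-1)^{k-r+1}(-1)^{i}(-1)^{j}$ splits as a product of a row sign and a column sign, so it contributes only a nonzero diagonal scaling on each side and may be discarded when checking invertibility. Using $\binom{k}{k-r+1+j-i} = \binom{k}{(r-1)+i-j}$ and setting $m = r-1$, $n = k-r+1$ (so $m+n = k$), the remaining matrix is exactly $\bigl[\binom{m+n}{m+i-j}\bigr]_{1\le i,j\le l+1}$, whose determinant is nonzero by Lemma \ref{invertible}. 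The main obstacle is the index bookkeeping required to recognize the coefficient matrix in this form; once done, the case $r=1$ could alternatively be settled by noting directly that the matrix is lower triangular with $(-1)^{k}$ on the diagonal, while the case $s=1$ genuinely needs the full strength of Lemma \ref{invertible}.
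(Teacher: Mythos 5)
Your proposal is correct and follows essentially the same route as the paper: expand $\varphi_{pq}^{k}(e_{ab})=\sum_{i=0}^{k}(-1)^{i}\binom{k}{i}e_{a-k+i,b+i}$, compare coefficients along the single diagonal to obtain the $(l+1)\times(l+1)$ system, and identify the coefficient matrix (up to row/column sign scalings, and with $m,n$ swapped relative to the paper's choice $m=k-r+1$, $n=r-1$, which is immaterial by the symmetry of the determinant in Lemma \ref{invertible}) with the binomial matrix of Lemma \ref{invertible}. Your explicit factoring of the signs and the triangularity remark for $r=1$ are slightly more careful than the paper's write-up but do not change the argument.
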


\begin{proof}
Note that $\varphi_{pq}^{k}(e_{ij})=\sum_{l=0}^{k}(-1)^{l}\tbinom{k}{l}e_{i-k+l,j+l}$. Then
\[
\begin{split}
\varphi_{pq}^{k}(\sum_{j=0}^{l}x_{j}e_{r+j,s+j})&=\sum_{j=0}^{l}x_{j}\sum_{t=0}^{k}(-1)^{t}\tbinom{k}{t}e_{r+j-k+t,s+j+t}\\
&=\sum_{u=0}^{k+l} \bigg(\sum_{j=0}^{l}(-1)^{u-j}\tbinom{k}{u-j}x_{j}\bigg)e_{r-k+u,s+u} \\
&=\sum_{u=k-r+1}^{k+l} \bigg(\sum_{j=0}^{l}(-1)^{u-j}\tbinom{k}{u-j}x_{j}\bigg)e_{r-k+u,s+u} 
\end{split}
\]
where $\tbinom{k}{l}=0$ if $l<0$ or $l>k$.
Let $m=k-r+1, n=r-1$, $u=m+i$. Then 
\[\varphi_{pq}^{k}(\sum_{j=0}^{l}x_{j}e_{r+j,s+j})=\sum_{i=0}^{l}\bigg(\sum_{j=0}^{l}(-1)^{m+i-j}\tbinom{m+n}{m+i-j}x_{j}\bigg)e_{1+i,s+m+i}+\sum_{i>l}y_{i}e_{1+i,s+m+i} \]
Thus the formula (\ref{formula})
\[\varphi_{pq}^{k}(\sum_{j=0}^{l}x_{j}e_{r+j,s+j})=\sum_{j=0}^{l}e_{1+j,k-r+1+s+j}+\sum_{j>l}y_{j}e_{1+j,k-r+1+s+j}\] 
is equivalent to the linear equations
\[\sum_{j=0}^{l}(-1)^{m+i-j}\tbinom{m+n}{m+i-j}x_{j}=1\quad \text{for } i=0,1,\cdots,l\]
By Lemma \ref{invertible}, the coefficient matrix is invertible, so there is a unique solution $(x_{0},x_{1},\cdots, x_{l})$ of the linear equations.
\end{proof}

\begin{lemma}\label{stratification}
Let $\varphi_{\gamma\delta}$ be the operator of $M_{m\times n}(\mathbb{C})$ defined by 
$$\varphi_{\gamma\delta}(X)=N_{\gamma} X-XN_{\delta}\quad  \text{for }  X\in M_{m\times n}(\mathbb{C}).$$ where $\gamma=(\gamma_{1}, \gamma_{2}, \dots , \gamma_{p})$ and $\delta=(\delta_{1}, \delta_{2}, \dots , \delta_{q})$.
Then  \[\varphi_{\gamma\delta}^{k}(\sum_{j=0}^{l}x_{j}E_{r+j,s+j}\boxtimes Z_{r+l,s+l})=\sum_{j=0}^{l}E_{1+j,k-r+1+s+j}\boxtimes Z_{r+l,s+l}\]
where $Z_{r+l,s+l}=\operatorname{span}\{e_{ij}\mid 1\leq i\leq \gamma_{r+l}, \delta_{2+k-r+s+l}+1\leq j\leq \delta_{s+l}\}$, and $(x_{0},x_{1},\cdots, x_{l})$ is the solution of the formula (\ref{formula}) in Lemma \ref{coefficient}. 
Furthermore, $$\varphi_{\gamma\delta}^{k+1}(\sum_{j=0}^{l}x_{j}E_{r+j,s+j}\boxtimes Z_{r+l,s+l})=0$$
where $E_{ij}\boxtimes-$ is of type $(\gamma,\delta)$.
\end{lemma}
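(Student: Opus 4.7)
The plan is to lift the scalar identity of Lemma \ref{coefficient} to the block setting by a binomial expansion, using the range defining $Z_{r+l,s+l}$ precisely to kill the leakage terms appearing in formula (\ref{formula}). Fix a basis element $e_{uv}\in Z_{r+l,s+l}$, so $1\le u\le \gamma_{r+l}$ and $\delta_{2+k-r+s+l}+1\le v\le \delta_{s+l}$. Since left multiplication by $\Gamma=N_{\gamma}$ commutes with right multiplication by $\Delta=N_{\delta}$, I would expand
\[
\varphi_{\gamma\delta}^{k}=\sum_{t=0}^{k}(-1)^{t}\binom{k}{t}\,\Gamma^{k-t}(\,\cdot\,)\Delta^{t}.
\]
Because $\Gamma=\sum_{a}E_{a,a+1}(\gamma)\boxtimes N_{\gamma_{a},\gamma_{a+1}}$ and similarly for $\Delta$, a direct blockwise computation yields $\Gamma^{a}(E_{ij}\boxtimes e_{uv})\Delta^{b}=E_{i-a,\,j+b}\boxtimes e_{uv}$, under the standing convention (forced by the $\boxtimes$ definition) that $E_{c,d}\boxtimes e_{uv}=0$ whenever $c<1$, $d>q$, $u>\gamma_{c}$, or $v>\delta_{d}$. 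Hence
\[
\varphi_{\gamma\delta}^{k}(E_{ij}\boxtimes e_{uv})=\sum_{t=0}^{k}(-1)^{t}\binom{k}{t}\,E_{i-k+t,\,j+t}\boxtimes e_{uv},
\]
with the same truncation.

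Next I would apply this expansion to $\sum_{j=0}^{l}x_{j}E_{r+j,s+j}\boxtimes e_{uv}$ and reindex by the output block $(1+j',\,k-r+1+s+j')$; all surviving blocks share the antidiagonal $d-c=k-r+s$, so the coefficient attached to such a block equals
\[
\sum_{j}x_{j}(-1)^{k-r+1+j'-j}\binom{k}{k-r+1+j'-j},
\]
which is exactly the scalar coefficient controlled by formula (\ref{formula}). By Lemma \ref{coefficient}, it equals $1$ for $0\le j'\le l$. For the leakage indices $j'\ge l+1$ one has $k-r+1+s+j'\ge 2+k-r+s+l$, so $\delta_{k-r+1+s+j'}\le \delta_{2+k-r+s+l}<v$, and the corresponding block truncates to $0$. (If for some $j'\le l$ the column $v$ happens to exceed $\delta_{k-r+1+s+j'}$, both sides vanish at that block, preserving the identity.) This proves the first equality.

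For the ``furthermore'' claim I would apply $\varphi_{\gamma\delta}$ once more to the image just obtained: each summand contributes $E_{j,\,k-r+1+s+j}\boxtimes e_{uv}-E_{1+j,\,k-r+2+s+j}\boxtimes e_{uv}$, where the second piece truncates to $0$ if $v>\delta_{k-r+2+s+j}$. The telescoping is then the crux: for $1\le j\le l$, the first piece of index $j$ and the second piece of index $j-1$ both sit in the block $(j,\,k-r+1+s+j)$ with opposite signs, and either they coincide (and cancel) or both are zero by column truncation. The $j=0$ first piece is $0$ because its block row is $0$, and the only survivor is the $j=l$ second piece $-E_{1+l,\,k-r+2+s+l}\boxtimes e_{uv}$, which is annihilated exactly by the hypothesis $v>\delta_{2+k-r+s+l}$ built into $Z_{r+l,s+l}$. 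The main obstacle is the bookkeeping: matching the binomial indices in the block expansion with those solved for in Lemma \ref{coefficient}, and verifying that the cut-off $\delta_{2+k-r+s+l}+1$ is sharp—strong enough to annihilate every leakage block yet mild enough to remain consistent with the intended blocks on the right.
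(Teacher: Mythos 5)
Your proposal is correct. For the first identity your route is the same as the paper's, only made explicit: the paper simply asserts the block identity, implicitly lifting the scalar computation of Lemma \ref{coefficient}, whereas you carry out the lift via the commuting-operator binomial expansion $\varphi_{\gamma\delta}^{k}=\sum_{t}(-1)^{t}\binom{k}{t}\Gamma^{k-t}(\cdot)\Delta^{t}$, check that $\Gamma^{a}(E_{ij}\boxtimes e_{uv})\Delta^{b}=E_{i-a,j+b}\boxtimes e_{uv}$ under the natural truncation convention (correct, since $\gamma$ is nonincreasing so rows never truncate while columns truncate exactly when $v>\delta_{j+b}$), and observe that the leakage coefficients $y_{j'}$ with $j'>l$ sit in block columns $\geq 2+k-r+s+l$ and are therefore annihilated by the column constraint $v>\delta_{2+k-r+s+l}$ defining $Z_{r+l,s+l}$ --- which is precisely the point of that cut-off. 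Where you genuinely diverge is the ``furthermore'' claim: the paper rewrites $\sum_{j=0}^{l}E_{1+j,k-r+1+s+j}\boxtimes Z_{r+l,s+l}$ as a combination of the explicit kernel elements of Lemma \ref{kernal-image} (partial diagonal sums restricted to the column strips $Y_{r+l,k-r+s+t}$), while you apply $\varphi_{\gamma\delta}$ once more and telescope directly, with the single surviving term $-E_{1+l,k-r+2+s+l}\boxtimes e_{uv}$ killed again by $v>\delta_{2+k-r+s+l}$. Your telescoping is in effect a re-derivation of the relevant part of Lemma \ref{kernal-image}, so it is more self-contained and elementary; the paper's version has the advantage of exhibiting the image of $\varphi_{\gamma\delta}^{k}$ as a span of the named kernel basis vectors, which is what Lemma \ref{block} subsequently exploits through Lemma \ref{lemma2}. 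Both arguments are sound, and your parenthetical remark that blocks with $v>\delta_{k-r+1+s+j'}$ for some $j'\leq l$ vanish on both sides correctly disposes of the only remaining edge case.
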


\begin{proof}
Let $\Gamma=N_{\gamma}, \Delta=N_{\delta}$.
Note that
\[
 \varphi_{\gamma\delta}(E_{ij}\boxtimes X_{ij})
 =E_{i-1,j}\boxtimes \Gamma_{i-1,i}X_{ij} -E_{i,j+1}\boxtimes X_{ij}\Delta_{j,j+1}
 \]
Then we have
 \[
 \begin{split}
 \varphi_{\gamma\delta}^{k}(\sum_{j=0}^{l}x_{j}E_{r+j,s+j}\boxtimes Z_{r+l,s+l})
&=\sum_{j=0}^{l}E_{1+j,k-r+1+s+j}\boxtimes Z_{r+l,s+l}\\
&=\sum_{j=1}^{l+1}E_{j,k-r+s+j}\boxtimes Z_{r+l,s+l}\\
&=\sum_{t=1}^{l+1}(\sum_{j=1}^{t}E_{j,k-r+s+j})\boxtimes (Z_{r+l,s+l}\cap Y_{r+l, k-r+s+t})
\end{split}\]
where $Y_{rs}=\operatorname{span}\{e_{ij}\mid 1\leq i\leq \gamma_{r}, \delta_{s+1}+1\leq j\leq \delta_{s}\}$.
Since $\sum_{j=1}^{t}E_{j,k-r+s+j}\boxtimes (Z_{r+l,s+l}\cap Y_{r+l, k-r+s+t})\subset \operatorname{ker} \varphi_{\gamma\delta}$ by Lemma \ref{kernal-image}, 
it follows that  $\varphi_{\gamma\delta}^{k+1}(\sum_{j=0}^{l}x_{j}E_{r+j,s+j}\boxtimes Z_{r+l,s+l})=0$. Here $E_{ij}\boxtimes-$ is of type $(\gamma,\delta)$.
\end{proof}

\begin{lemma}\label{block}
Let $\varphi_{\gamma\delta}$ be the operator of $M_{m\times n}(\mathbb{C})$ defined by 
$$\varphi_{\gamma\delta}(X)=N_{\gamma} X-XN_{\delta}\quad  \text{for }  X\in M_{m\times n}(\mathbb{C}).$$ where $\gamma=(\gamma_{1}, \gamma_{2}, \dots , \gamma_{p})$ and $\delta=(\delta_{1}, \delta_{2}, \dots , \delta_{q})$.
Then  \[\operatorname{dim} \operatorname{ker} \varphi_{\gamma\delta}^{k}=\sum_{r=2}^{k}\sum_{l=0}^{\min\{p,q\}}\gamma_{r+l}(\delta_{1+l}-\delta_{2+k-r+l})+\sum_{l=0}^{\min\{p,q\}}\gamma_{1+l}\sum_{s=1}^{k}\delta_{s+l}\]
where $\gamma_{l}=0$ if $l>p$ and $\delta_{l}=0$ if $l>q$.
Moreover, a basis of $\operatorname{ker} \varphi_{\gamma\delta}^{k}$ is 
$$\sum_{j=0}^{l}x_{kj}E_{r+j,s+j}\boxtimes Z_{r+l,s+l}$$ where $(x_{k0},x_{k1},\cdots,x_{kl})$ is the solution of the formula (\ref{formula}) in Lemma \ref{coefficient} for $\varphi_{pq}^{k-1}$, and $Z_{r+l,s+l}$ runs over $\{e_{ij}\mid 1\leq i\leq \gamma_{r+l}, \delta_{1+k-r+s+l}+1\leq j\leq \delta_{s+l}\}$, and $2\leq r\leq k$, $0\leq l\leq \min\{p-r,q-1\}$ when $s=1$, and $1\leq s\leq q$, $0\leq l\leq \min\{p-1,q-s\}$ when $r=1$.
Here $E_{ij}\boxtimes-$ is of type $(\gamma,\delta)$.
\end{lemma}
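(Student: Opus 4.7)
The plan is induction on $k$. The base case $k=1$ follows from Lemma \ref{kernal-image}: when $k=1$, the formula (\ref{formula}) applied to $\varphi_{pq}^0=\operatorname{id}$ forces $x_{1,j}=1$, the $s=1$ range $2\le r\le 1$ is empty, the $r=1$ range reproduces the chain basis of Lemma \ref{kernal-image}, and the dimension formula reduces to $\sum_l\gamma_{1+l}\delta_{1+l}=\sum_{i=1}^{\min\{p,q\}}\gamma_i\delta_i$.

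For the inductive step, assume the statement for exponent $k$ and consider $k+1$. By Lemma \ref{stratification} applied with exponent $k$, every element
\[\xi:=\sum_{j=0}^{l}x_{k+1,j}\,E_{r+j,s+j}\boxtimes e_{uv}\]
of the proposed basis $B_{k+1}$ lies in $\ker\varphi_{\gamma\delta}^{k+1}$, and its $\varphi_{\gamma\delta}^{k}$-image equals $\sum_{j=0}^{l}E_{1+j,\,k-r+1+s+j}\boxtimes e_{uv}$, which belongs to $\ker\varphi_{\gamma\delta}\cap\operatorname{im}\varphi_{\gamma\delta}^{k}$ by Lemma \ref{kernal-image}. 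I would then stratify $B_{k+1}$ according to whether this image is zero: the nonzero-image portion corresponds to $v$ in the range $\delta_{2+k-r+s+l}+1\le v\le\delta_{1+k-r+s+l}$ (the entries newly admissible at the $(k+1)$-th step), while the zero-image portion corresponds to $v$ in $B_k$'s range $\delta_{1+k-r+s+l}+1\le v\le\delta_{s+l}$. Under $\varphi_{\gamma\delta}^{k}$, the nonzero-image tuples map to a pairwise distinct family of chain basis elements of $\ker\varphi_{\gamma\delta}$ that by inspection exhausts $\ker\varphi_{\gamma\delta}\cap\operatorname{im}\varphi_{\gamma\delta}^{k}$. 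Lemma \ref{lemma2} then yields
\[\dim\ker\varphi_{\gamma\delta}^{k+1}=\dim\ker\varphi_{\gamma\delta}^{k}+\dim(\ker\varphi_{\gamma\delta}\cap\operatorname{im}\varphi_{\gamma\delta}^{k}),\]
and a direct count of the nonzero-image tuples, combined with the telescoping identity $\sum_{s\ge 1}(\delta_{s+l}-\delta_{k+s+l})=\sum_{s=1}^{k}\delta_{s+l}$ (valid since $\delta_i=0$ for $i>q$), recovers the dimension formula for exponent $k+1$. Linear independence of $B_{k+1}$ then follows by combining the injectivity of $\varphi_{\gamma\delta}^{k}$ on the nonzero-image portion with the inductive hypothesis on the zero-image portion, the latter being a reparametrization of $B_k$ (with coefficients exchanged via Lemma \ref{coefficient}) that spans $\ker\varphi_{\gamma\delta}^{k}$.

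The main obstacle is the bookkeeping: verifying that the two index families of $B_{k+1}$, the row family with $r=1$, $1\le s\le q$, and the column family with $s=1$, $2\le r\le k+1$, exhaust $\ker\varphi_{\gamma\delta}\cap\operatorname{im}\varphi_{\gamma\delta}^{k}$ without double-counting at the corner $r=s=1$ (which is assigned by convention to the row family only), and that the stated $Z$-range cleanly separates the nonzero- and zero-image portions. Once this alignment is confirmed, the induction closes simultaneously for the dimension formula and the explicit basis description.
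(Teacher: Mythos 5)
Your plan is correct and is essentially the argument the paper intends: the published proof of Lemma \ref{block} uses exactly the same ingredients --- Lemma \ref{kernal-image} for the base case and the structure of $\operatorname{ker}\varphi_{\gamma\delta}$, Lemma \ref{stratification} (with coefficients from Lemma \ref{coefficient}) to place the chains in $\operatorname{ker}\varphi_{\gamma\delta}^{k}$ and compute their images, the filtration of Lemma \ref{lemma2}, and the same telescoping $\sum_{s}(\delta_{s+l}-\delta_{k+s+l})=\sum_{s=1}^{k}\delta_{s+l}$. The one step you defer to ``inspection'' (that the images of the nonzero-image tuples exhaust $\operatorname{ker}\varphi_{\gamma\delta}\cap\operatorname{im}\varphi_{\gamma\delta}^{k}$) is likewise left implicit in the paper, so your write-up is, if anything, more explicit than the original.
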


\begin{proof}
By Lemma \ref{kernal-image} and Lemma \ref{stratification}, we have 
 \[ \begin{split}
 \operatorname{dim} \operatorname{ker} \varphi_{\gamma\delta}^{k}&=\sum_{r=2}^{k}\sum_{l=0}^{\min\{p,q\}}\gamma_{r+l}(\delta_{1+l}-\delta_{2+k-r+l})+\sum_{s=1}^{q}\sum_{l=0}^{\min\{p,q\}}\gamma_{1+l}(\delta_{s+l}-\delta_{k+s+l})\\
 &=\sum_{r=2}^{k}\sum_{l=0}^{\min\{p,q\}}\gamma_{r+l}(\delta_{1+l}-\delta_{2+k-r+l})+\sum_{l=0}^{\min\{p,q\}}\gamma_{1+l}\sum_{s=1}^{k}\delta_{s+l}\\
 &=\sum_{r=2}^{k}\sum_{l=0}^{\min\{p-r,q-1\}}\gamma_{r+l}(\delta_{1+l}-\delta_{2+k-r+l})+\sum_{l=0}^{\min\{p-1,q-s\}}\gamma_{1+l}\sum_{s=1}^{\min\{k,q-l\}}\delta_{s+l}\\
\end{split}\]
where $\gamma_{l}=0$ if $l>p$ and $\delta_{l}=0$ if $l>q$.

Furthermore, a basis of $\operatorname{ker} \varphi_{\gamma\delta}^{k}$ is 
$$\sum_{j=0}^{l}x_{kj}E_{r+j,s+j}\boxtimes Z_{r+l,s+l}$$ where $(x_{k0},x_{k1},\cdots,x_{kl})$  is the solution of the formula (\ref{formula}) in Lemma \ref{coefficient} for $\varphi_{pq}^{k-1}$, and $Z_{r+l,s+l}$ runs over $\{e_{ij}\mid 1\leq i\leq \gamma_{r+l}, \delta_{1+k-r+s+l}+1\leq j\leq \delta_{s+l}\}$, and $2\leq r\leq k$, $0\leq l\leq \min\{p-r,q-1\}$ when $s=1$, and $1\leq s\leq q$, $0\leq l\leq \min\{p-1,q-s\}$ when $r=1$.
Here $E_{ij}\boxtimes-$ is of type $(\gamma,\delta)$.

\end{proof}

\section{Proof of Theorem \ref{main}}\label{section3}
 
\begin{proof}
By the Weyr canonical form theorem \cite[Theorem 2.2.2]{MCV} or \cite[Theorem 3.4.2.3]{HJ}, 
every square matrix over an algebraically closed field is similar to a matrix in Weyr canonical form.
Hence there exist invertible matrices $P\in \operatorname{GL}_{m}(\mathbb{C})$ and $Q\in \operatorname{GL}_{n}(\mathbb{C})$
such that $P^{-1}AP=W_{A}$ and $Q^{-1}BQ=W_{B}$ are Weyr matrices, respectively.
Then for each $\ell$, we have $$P^{-1}\varphi_{AB}^{\ell}(X)Q=\varphi_{W_{A}W_{B}}^{\ell}(P^{-1}XQ)$$
Therefore, without loss of generality, we may assume that $A\in M_{m\times m}(\mathbb{C})$ and $B\in M_{n\times n}(\mathbb{C})$ are matrices in Weyr canonical form.

By hypothesis, $\alpha_{i}=(\alpha_{i1},\alpha_{i2},\dots ,\alpha_{ip_i})=\omega(A,\lambda_{i})$ is the Weyr characteristic of $A$ associated with eigenvalue $\lambda_{i}$, for $i=1,2,\dots, a$,  and $\beta_{j}=(\beta_{j1},\beta_{j2},\dots ,\beta_{jq_j})=\omega(B,\mu_{j})$ is the Weyr characteristic of $B$ associated with eigenvalue $\mu_{j}$, for $ j=1,2,\dots,b$.
Thus we may assume that
$$A= \begin{pmatrix}
	\lambda_{1}I_{\alpha_{1}}+N_{\alpha_{1}} &   &   &   \\   
	&	\lambda_{2}I_{\alpha_{2}}+N_{\alpha_{2}}  &    &  \\  
	&   & \ddots  &  \\  
	&   &    &  \lambda_{a}I_{\alpha_{a}}+N_{\alpha_{a}} \\   
\end{pmatrix}   	$$
and
$$B= \begin{pmatrix} 
	\mu_{1}I_{\beta_{1}}+N_{\beta_{1}} &   &   &   \\  
	&	\mu_{2}I_{\beta_{2}}+N_{\beta_{2}}  &    &  \\       
	&   & \ddots  &  \\   
	&   &    &  \mu_{b}I_{\beta_{b}}+N_{\beta_{b}} \\   
\end{pmatrix}   	$$

Let $X$ be partitioned into $a\times b$ blocks $X=(X_{ij})_{a\times b}$ of type $(\alpha,\beta)$,  where $(i,j)$ block matrix $X_{ij}$ is of size $|\alpha_{i}|\times |\beta_{j}|$, $|\alpha_{i}|=\sum_{k=1}^{p_{i}} \alpha_{ik}$, $|\beta_{j}|=\sum_{l=1}^{q_{j}} \beta_{jl}$, $\alpha=(|\alpha_{1}|,|\alpha_{2}|,\cdots,|\alpha_{a}|)$ and $\beta=(|\beta_{1}|,|\beta_{2}|,\cdots,|\beta_{b}|)$.
Partition $X_{ij}$ into $p_{i}\times q_{j}$ blocks $X_{ij}=(X_{i_{k}j_{l}})_{p_{i}\times q_{j}}$ of type $(\alpha_{i},\beta_{j})$,  where $(k,l)$ block matrix $X_{i_{k}j_{l}}$ is of size $\alpha_{ik}\times \beta_{jl}$.
Let $U=\varphi_{AB}(X)=(U_{ij})_{a\times b}$.
Then $U_{ij}=(\lambda_{i}I_{\alpha_{i}}+N_{\alpha_{i}})X_{ij}-X_{ij}(\mu_{j}I_{\beta_{j}}+N_{\beta_{j}})
=(\lambda_{i}-\mu_{j})X_{ij}+(N_{\alpha_{i}}X_{ij}-X_{ij}N_{\beta_{j}}).$
If $\lambda_{i}\neq \mu_{j}$, then by Lemma \ref{lemma1}, $U_{ij}=0$ if and only if $X_{ij}=0$.
If $\lambda_{i}=\mu_{j}$, then $U_{ij}=\varphi_{\alpha_{i}\beta_{j}}(X_{ij})$. For each $k\in\mathbb{N}$, we have $(\varphi_{AB}^{k}(X))_{ij}=\varphi_{\alpha_{i}\beta_{j}}^{k}(X_{ij})$ whenever $\lambda_{i}=\mu_{j}$. Therefore, we have
\[\operatorname{dim}\operatorname{ker}\varphi_{AB}^{k}=\sum_{i=1}^{a}\sum_{j=1}^{b} \delta_{\lambda_{i},\mu_{j}}\operatorname{dim} \operatorname{ker} \varphi_{\alpha_{i}\beta_{j}}^{k}\]
Let $d_{ijk}=\operatorname{dim} \operatorname{ker} \varphi_{\alpha_{i}\beta_{j}}^{k}$. Then by Lemma \ref{block}, we have 
\[d_{ijk}=\sum_{r=2}^{k}\sum_{l=0}^{\min\{p_{i},q_{j}\}}\alpha_{i,r+l}(\beta_{j,1+l}-\beta_{j,2+k-r+l})+\sum_{l=0}^{\min\{p_{i},q_{j}\}}\alpha_{i,1+l}\sum_{s=1}^{k}\beta_{j,s+l}\]
where $\alpha_{i,l}=0$ if $l>p_{i}$ and $\beta_{j,l}=0$ if $l>q_{j}$.
Thus  $\operatorname{dim}\operatorname{ker}\varphi_{AB}^{k}=\sum_{i=1}^{a}\sum_{j=1}^{b}\delta_{\lambda_{i},\mu_{j}}d_{ijk}$ is
\[
\sum_{i=1}^{a}\sum_{j=1}^{b}\delta_{\lambda_{i},\mu_{j}}\bigg(\sum_{r=2}^{k}\sum_{l=0}^{\min\{p_{i},q_{j}\}}\alpha_{i,r+l}(\beta_{j,1+l}-\beta_{j,2+k-r+l})+\sum_{l=0}^{\min\{p_{i},q_{j}\}}\alpha_{i,1+l}\sum_{s=1}^{k}\beta_{j,s+l}\bigg)
\]

A basis of $\operatorname{ker}\varphi_{AB}^{k}$ is
$$E_{ij}(\alpha, \beta)\boxtimes\left(\sum_{t=0}^{l}x_{kt}E_{r+t,s+t}(\alpha_{i},\beta_{j})\boxtimes Z_{r+l,s+l}(ij)\right)$$ 
where $(x_{k0},x_{k1},\cdots,x_{kl})$ is the solution of the formula (\ref{formula}) in Lemma \ref{coefficient} for $\varphi_{p_{i}q_{j}}^{k-1}$, and $Z_{r+l,s+l}(ij)$ runs over $\{e_{uv}\mid 1\leq u\leq \alpha_{i,r+l}, \beta_{j,1+k-r+s+l}+1\leq v\leq \beta_{j,s+l}\}$, and $1\leq i\leq a$, $1\leq j\leq b$ such that $\lambda_{i}=\mu_{j}$, $2\leq r\leq k$, $0\leq l\leq \min\{p_{i}-r,q_{j}-1\}$ when $s=1$, and $1\leq s\leq q_{j}$, $0\leq l\leq \min\{p_{i}-1,q_{j}-s\}$ when $r=1$.

By \cite[Proposition 2.2.5]{MCV}, an operator $\varphi_{AB}$ is determined within similarity by its eigenvalues and the Weyr characteristics (or Weyr structures) associated with these eigenvalues.
Let $\lambda$ be an eigenvalue of the operator $\varphi_{AB}$.
Let $\omega=(\omega_{1}, \omega_{2},\dots, \omega_{q})$ be the 
Weyr characteristic of the operator $\varphi_{AB}$ associated with $\lambda$.
Note that $\varphi_{AB}-\lambda I=\varphi_{(A-\lambda I), B}$,
so $\operatorname{nullity}(\varphi_{AB}-\lambda I)^{k}=\operatorname{dim}\operatorname{ker}\varphi_{(A-\lambda I),B}^{k}$.
Since we have $\omega_{1}=\operatorname{nullity}(\varphi_{AB}-\lambda I)$, and $\omega_{k}=\operatorname{nullity}(\varphi_{AB}-\lambda I)^{k}-\operatorname{nullity}(\varphi_{AB}-\lambda I)^{k-1}$, it follows that $\omega_{k}=\operatorname{dim}\operatorname{ker}\varphi_{(A-\lambda I),B}^{k}-\operatorname{dim}\operatorname{ker}\varphi_{(A-\lambda I),B}^{k-1}$ for $k \in\mathbb{N}$. Hence by \cite[Proposition 2.2.8]{MCV},
the operator $\varphi_{AB}$ is similar to $\varphi_{CD}$ if and only if 
$\operatorname{dim}\operatorname{ker}\varphi_{(A-\lambda I), B}^{k}=\operatorname{dim}\operatorname{ker}\varphi_{(C-\lambda I), D}^{k}$ for every eigenvalues $\lambda$ of  $\varphi_{AB}$ and $k \in\mathbb{N}$, where  $A, C\in M_{m\times m}(\mathbb{C})$, and $B, D\in M_{n\times n}(\mathbb{C})$.
Thus $\operatorname{dim}\operatorname{ker}\varphi_{(A-\lambda I), B}^{k}$ for each $k \in\mathbb{N}$ in Theorem \ref{main} is exactly equivalent to giving the Weyr characteristic (Weyr structure)  for each eigenvalue $\lambda$ of the operator $\varphi_{AB}$. 
So in effect the main result provides an invariant that determines the operator up to similarity.

We claim that the set of the eigenvalues of the operator $\varphi_{AB}$ is $\{\lambda_{i}-\mu_{j}\mid 1\leq i\leq a, 1\leq j\leq b\}$.
In fact, $\lambda$ is an eigenvalue of the operator $\varphi_{AB}$ if and only if $\operatorname{nullity}(\varphi_{AB}-\lambda I)>0$. Note that 
\[\operatorname{nullity}(\varphi_{AB}-\lambda I)=\operatorname{dim}\operatorname{ker}\varphi_{(A-\lambda I),B}^{}=
\sum_{i=1}^{a}\sum_{j=1}^{b}\delta_{\lambda_{i}-\lambda,\mu_{j}}\sum_{l=0}^{\min\{p_{i},q_{j}\}}\alpha_{i,1+l}\beta_{j,1+l}
\]
Hence $\operatorname{nullity}(\varphi_{AB}-\lambda I)>0$ if and only if $\lambda=\lambda_{i}-\mu_{j}$ for some $i$ and $j$.
Thus the set of eigenvalues of the operator $\varphi_{AB}$ is exactly the set $\{\lambda_{i}-\mu_{j}\mid 1\leq i\leq a, 1\leq j\leq b\}$ as claimed.

Let $\omega=(\omega_{1}, \omega_{2},\dots, \omega_{q})$ be the 
Weyr characteristic of the operator $\varphi_{AB}$ associated with an eigenvalue $\lambda$. Then we have 
\[
\begin{split}
\omega_{k}&=\operatorname{dim}\operatorname{ker}\varphi_{(A-\lambda I),B}^{k}-\operatorname{dim}\operatorname{ker}\varphi_{(A-\lambda I),B}^{k-1}\\
&=\sum_{i=1}^{a}\sum_{j=1}^{b}\delta_{\lambda_{i}-\lambda,\mu_{j}}\sum_{l=0}^{\min\{p_{i},q_{j}\}}\bigg(\alpha_{i,1+l}\beta_{j,k+l}+\sum_{r=2}^{k}\alpha_{i,r+l}(\beta_{j,1+k-r+l}-\beta_{j,2+k-r+l})\bigg)
\end{split}
\]
and the index $q$ of the eigenvalue $\lambda$ of $\varphi_{AB}$ is equal to  $\max\{p_{i}+q_{j}-1\mid \lambda_{i}-\mu_{j}=\lambda\}$.

Note that $\omega_{1}=\operatorname{nullity}(\varphi_{AB}-\lambda I)=\operatorname{dim}\operatorname{ker}\varphi_{(A-\lambda I),B}^{}$ is the geometric multiplicity of $\lambda$, and 
$\sum_{k=1}^{q}\omega_{k}=\operatorname{dim}\operatorname{ker}\varphi_{(A-\lambda I),B}^{q}$ is the algebraic multiplicity of $\lambda$.

\end{proof}

\textbf{Acknowledgments} The authors would like to thank Prof. Kevin O’Meara for his enthusiastic help and comments to improve this paper.


\begin{thebibliography}{99}

\bibitem{BFPS} Bondarenko VM, Futorny V, Petravchuk AP, Sergeichuk VV, Pairs of commuting nilpotent operators with one-dimensional intersection of kernels and matrices commuting with a Weyr matrix, Linear Alg. Appl 612 (2021), 188-205.

\bibitem{CM} Cruz R J  De La, Misa E, The algebra generated by nilpotent elements in a matrix centralizer, Electronic Journal of Linear Algebra, 38 (2022), 1-8.

\bibitem{CT} Cruz R J  De La, Tanedo RL, The products of involutions in a matrix centralizer, Electronic Journal of Linear Algebra, 38 (2022), 463-482.

\bibitem{Gantmacher} Gantmacher FR, The Theory of Matrices, Vol. 1, AMS Chelsea Publishing, 1977.
	
\bibitem{Gra} Gracia JM. Dimension of the solution spaces of the matrix equations $[A,[A,X]]=0$ and $[A[A,[A,X]]]=0$. Linear and Multilinear Algebra. 1980;9:195–200.

\bibitem{HO} Holbrook J, O'Meara KC, Commuting triples of matrices versus the computer, FEATURE article, Spring 2021, IMAGE 66, 8--11. https://arxiv.org/abs/2006.08588.

\bibitem{HJ}  Horn RA, Johnson CR. Matrix analysis. 2nd ed. Cambridge University Press. Cambridge, 2013.

\bibitem{LLW} Liao J, Liu H, Wang Y, et al. Generalization of Gracia’s results. Electronic Journal of Linear Algebra. 2015;30:243–252.
	
 \bibitem{LLWX} Liao J, Liu H, Wu Z, Xiong H. Kernels for certain kinds of operators. Linear and Multilinear Algebra. 2017;65:6,1131-1141.
 
  \bibitem{OV} O'Meara KC, Vinsonhaler CI. On approximately simultaneously diagonalizable matrices, Linear Alg. Appl 412 (2006), 39-74.

 \bibitem{MCV} O'Meara KC, Clark J, Vinsonhaler CI. Advanced Topics in Linear Algebra Weaving Matrix Problems through the Weyr Form. Oxford University Press. 2011.
    	
	    
\bibitem{MW} O'Meara KC, Watanabe J. Weyr structures of matrices and relevance to commutative finite-dimensional algebras, Linear Alg. Appl 532 (2017), 364-386.

\bibitem{Wedd} Wedderburn M. Lectures on matrices. Vol. 17, American mathematical society colloquium publications. New York (NY): American Mathematical Society; 2008.


 
\end{thebibliography}
\end{document}